   \newtheorem{lemma}{Lemma}[section]
   \newtheorem{theorem}{Theorem}[section]
   \newcommand{\be}{\begin{equation}}
   \newcommand{\ee}{\end{equation}}
\begin{document}
    \title{An explicit computational approach for a three-dimensional system of nonlinear elastodynamic sine-Gordon problem}
   \author{Eric Ngondiep \thanks{\textbf{Email address:} ericngondiep@gmail.com}}
   \date{\small{Department of Mathematics and Statistics, College of Science, Imam Mohammad Ibn Saud\\ Islamic University (IMSIU), $90950$ Riyadh $11632,$ Saudi Arabia.}}
   \maketitle

   \textbf{Abstract.}
   This paper proposes an explicit computational method for solving a three-dimensional system of nonlinear elastodynamic sine-Gordon equations subject to appropriate initial and boundary conditions. The time derivative is approximated by interpolation technique whereas the finite element approach is used to approximate the space derivatives. The developed numerical scheme is so-called, high-order explicit computational technique. The new algorithm efficiently treats the time derivative term and provides a suitable time step restriction for stability and convergence. Under this time step limitation, both stability and error estimates of the proposed approach are deeply analyzed using a constructed strong norm. The theoretical studies indicate that the developed approach is temporal second-order convergent and spatially third-order accurate. Some numerical examples are carried out to confirm the theory, to validate the computational efficiency and to demonstrate the practical applicability of the new computational technique.\\
    \text{\,}\\

   \ \noindent {\bf Keywords:} three-dimensional system of nonlinear elastodynamic sine-Gordon model, interpolation technique, finite element formulation, combined interpolation with finite element formulation, stability analysis and error estimates.\\
   \\
   {\bf AMS Subject Classification (MSC): 65M12, 65M06, 74H15}.

  \section{Introduction}\label{sec1}

   \text{\,\,\,\,\,\,\,\,\,\,}The elastodynamic model is mainly based on Newtow's second law of motion in homogeneous equivalent elastic media or Hamilton's principle in various complex media \cite{9zzllw,6zzllw,14zzllw,12zzllw}. These subsurface media are formed of pore fluids and solid skeletons such as underground nuclear tests, gas, oil, quick-frozen meals, water, permafrost in the arctic area, petroleum, cement material and sandstones, etc... Wave propagation analysis in porous media is crucial for monitoring the hydrological environment, predicting earthquakes, exploring oil and gas, monitoring frozen food, and characterizing sandstone reservoirs. However, earthquake and underground reservoir media are primary causes of nonlinear deformation (Figure $\ref{fig1}$). In the literature \cite{22zzllw,25zzllw,27zzllw,31zzllw,33zzllw, 28zzllw}, several authors formulated elastodynamic equations of motion in such media using the principle of least action or Hamilton's principle such as for porous media and piezoelectric materials. These equations generally model a three-dimensional system of tectonic deformation \cite{2enarxiv,33enarxiv,34enarxiv,enarxiv,40enarxiv} and can be used to describe the propagation of wave in anisotropic and multiphasic porous elastic media. Moreover, the three dimensional system of nonlinear elastodynamic sine-Gordon equations may be used for simulating a geological structure deformation case (for example, see Figure $\ref{fig1}$) caused by underground nuclear test exploration or earthquake occurring at a focus $B(\bar{x},r_{0})$ of the porous media \cite{2jl}. Because systems of nonlinear tectonic deformation equations deal with a nonlinear source term, they fall in the class of complex ordinary/partial differential equations (ODEs/PDEs) which are too difficult and sometimes impossible to solve analytically \cite{1en,10en,4lgzc,2en,3en,8lgzc,4en,5en,16lgzc,6en,7en,17lgzc,8en}.\\

        \begin{figure}
         \begin{center}
         \begin{tabular}{c c}
         \psfig{file=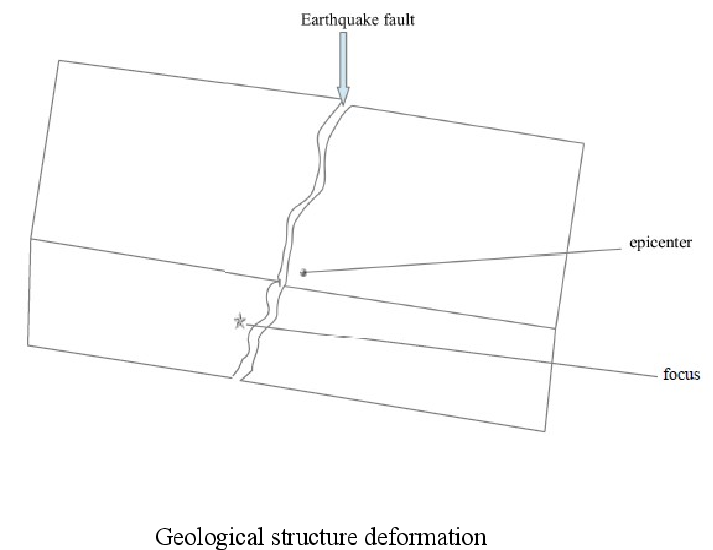,width=8cm} & \psfig{file=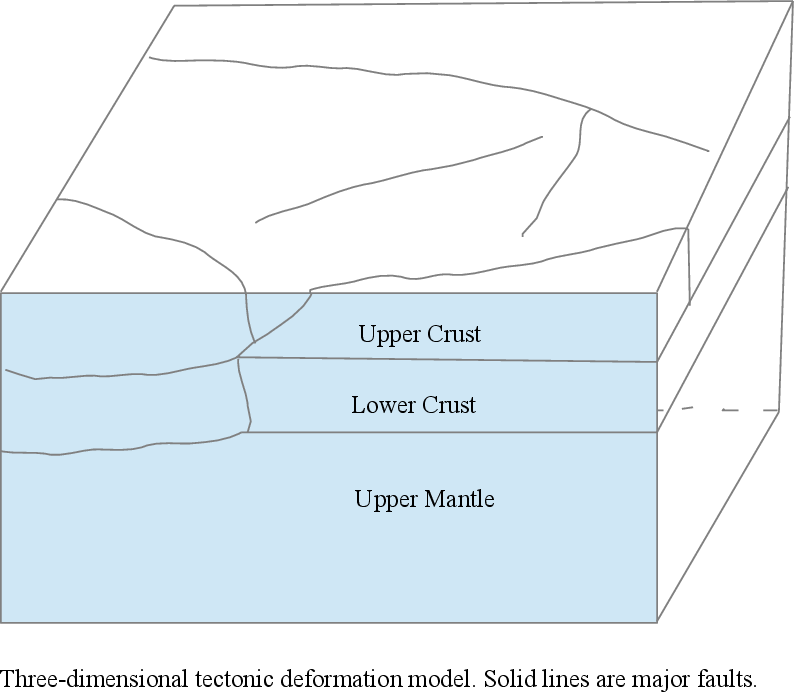,width=8cm}\\
         \psfig{file=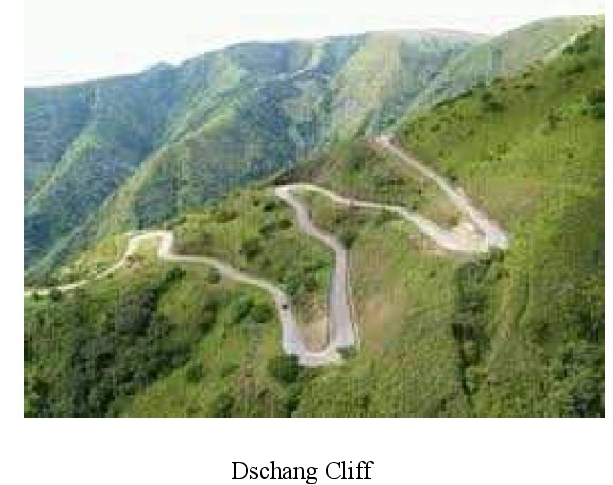,width=8cm} & \psfig{file=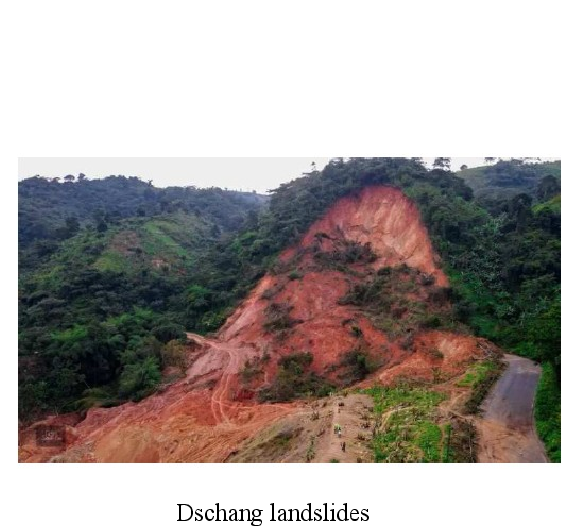,width=8cm}\\
         \end{tabular}
        \end{center}
        \caption{Geological structure deformation.}
        \label{fig1}
        \end{figure}

     In this paper, we consider the following three-dimensional system of nonlinear elastodynamic sine-Gordon equations defined in \cite{jtfyl} as
      \begin{equation}\label{1}
     \left\{
      \begin{array}{ll}
        \rho \frac{\partial^{2}u}{\partial t^{2}}-(\lambda_{1}+\lambda_{2})\nabla(\nabla\cdot u)-\lambda_{2}\nabla\cdot\overline{\nabla}u=F(u), & \hbox{on $\Omega\times[0,\text{\,}T]$}\\
        \text{\,}\\
        \psi-\lambda_{1}(\nabla\cdot u)\mathcal{I}-\lambda_{2}(\nabla u+(\nabla u)^{T})=0, & \hbox{on $\Omega\times[0,\text{\,}T]$}
      \end{array}
    \right.
    \end{equation}
    subjects to initial conditions
    \begin{equation}\label{2}
    u(x,0)=u_{0},\text{\,\,\,\,\,}u_{t}(x,0)=v(x),\text{\,\,\,\,\,}\psi(x,0)=\psi^{0}(x)=\lambda_{1}(\nabla\cdot u_{0}(x))\mathcal{I}+\lambda_{2}(\nabla u_{0}(x)+(\nabla u_{0}(x))^{T}),\text{\,\,\,\,\,\,\,\,\,\,\,\,on\,\,\,\,}\Omega,
    \end{equation}
    and boundary conditions
    \begin{equation}\label{3}
    u(x,t)=0,\text{\,\,\,\,\,}\psi(x,t)=0,\text{\,\,\,\,\,\,\,\,\,\,\,\,\,\,\,\,\,\,\,\,\,\,\,\,\,\,on\,\,\,\,}\Gamma\times[0,\text{\,}T],
    \end{equation}
    where $\Omega\subset\mathbb{R}^{3}$ denotes a bounded and connected domain, $\Gamma$ is the boundary of $\Omega$, $T$ designates the final time, $u(x,t)$ stands for the displacement at time $t$ of the material particle located at position $x\in\Omega$ while $\psi$ means the symmetric stress tensor. In addition, $\Delta$, $\nabla$, $\overline{\nabla}$ and $\nabla\cdot$, represent the laplacian, gradient, Jacobian and divergence operators, respectively, whereas $\overline{\nabla}w$ designates the jacobian matrix of a vector $u=(u_{1},u_{2},u_{3})^T\in\mathbb{R}^{3}$, $\nabla u=(\nabla u_{1}, \nabla u_{2},\nabla u_{3})^{T}$ and $\nabla\cdot\nabla u=(\nabla\cdot\nabla u_{1},\nabla\cdot\nabla u_{2},\nabla\cdot\nabla u_{3})^{T}$, where $w^{T}$ means the transpose of the vector $w$. $\rho$ is called the elastic body density while $\lambda_{1}$ and $\lambda_{2}$ are two physical parameters which may be defined as: $\lambda_{1}=\frac{\beta E}{(1+\beta)(1-2\beta)}$, $\lambda_{2}=\frac{E}{2(1+\beta)}$, where $E$ and $\beta$ denote elastic modulus and poisson's ratio, respectively. Finally, $\mathcal{I}$ is the identity operator, $F(u)$ is the nonlinear source term, $\psi^{0}$ denotes the initial stress tensor whereas $u_{0}$ and $u_{1}$ are initial conditions defined by
   \begin{equation}\label{4}
    u_{0}(x)=v(x)=\left\{
             \begin{array}{ll}
               \sin(2\pi r_{0}|x-\bar{x}|)\hat{x}, & \hbox{for $x\in B(\bar{x},r_{0})$} \\
               \text{\,}\\
               0, & \hbox{for $x\in\Omega\cup\Gamma\setminus B(\bar{x},r_{0})$}
             \end{array}
           \right.
   \end{equation}
   
   Here: $\hat{x}=(1,1,1)^{T}$, $\bar{x}\in\Omega$ is a fixed point, $r_{0}>0$, $|\cdot|$ is the usual norm in the vector space $\mathbb{R}^{3}$, $B(\bar{x},r_{0})=\{x\in\Omega,\text{\,}|x-\bar{x}|<r_{0}\}$ is the opened ball centered at the point $\bar{x}$ with radius $r_{0}$ and $\Gamma_{B(\bar{x},r_{0})}$ designates its boundary. The nonlinear term $F(u)$ is given by $F(u)=(\sin(u_{1}),\sin(u_{2}),\sin(u_{3}))^{T}$. Since $|x-\bar{x}|=r_{0}$ for every $x\in\Gamma_{B(\bar{x},r_{0})}$, so $\sin(2\pi r_{0}|x-\bar{x}|)=\sin(2\pi r_{0}^{2})$, on $\Gamma_{B(\bar{x},r_{0})}$. For $r_{0}=\sqrt{\frac{p}{2}}$, where $p\in\mathbb{N}\setminus\{0\}$, $u_{0}$ and its spatial derivatives vanish on $\Gamma_{B(\bar{x},r_{0})}$, this suggests that $u_{0}$ has continuous partial derivatives at any order defined on $\overline{\Omega}$. This fact along with the first equation in system $(\ref{1})$ show that $F(u)\in[H^{3}(0,T;\text{\,}H^{3}]^3$ which implies that the exact solution $u$ falls in the space $[H^{5}(0,T;\text{\,}H^{5}]^3$.\\

     In the literature, a broad range of numerical methods have been developed for solving time-dependent nonlinear PDEs. Such schemes include weak Galerkin finite element methods \cite{26enarxiv,8enarxiv,3enarxiv,41enarxiv,38enarxiv,33enarxiv}, higher-order finite difference/finite element schemes \cite{40enarxiv,2enarxiv,enarxiv,17en,16enarxiv,24enarxiv,48dg,22enarxiv,37enarxiv,19enarxiv,9en,18enarxiv,34enarxiv,28enarxiv} and spectral finite element techniques \cite{42dg,14dg}. Although several of the approaches mentioned above are simple and time-saving, they are limited to cartesian grids, which are not appropriate for geometrical internal or surface nonlinearities. However, these limitations have been solved by the use of discontinuity grids or nonuniform ones at the free surface \cite{6dg,54dg} and subdomains dealing with suited grids to complicated geometries have been established and deeply studied \cite{48dg,8dg}. Unfortunately, they are extremely expensive because they need the inversion of fully block matrices at each time level. In this paper, we construct an efficient explicit computational approach in an approximate solution of the three-dimensional system of nonlinear elastodynamic sine-Gordon equations $(\ref{1})$ with initial-boundary conditions $(\ref{2})$-$(\ref{3})$. The developed procedure is a combination of interpolation technique with the finite element formulation. More precisely, the time derivative is estimated by second-degree polynomial interpolations, while the space derivatives are approximated by third-degree polynomials defined locally on each mesh element. Under a suitable time step requirement, known as the Courant-Friedrich-Lewy (CFL) condition for the stability of explicit numerical methods applied to linear hyperbolic PDEs, the new algorithm is stable, second-order accurate in time, and spatially third-order convergent. Because it is explicit and uses the finite element formulation, there is no need to invert a block matrix at each time step, but it does allow for adaptive space sizes for geometrical internal or surface nonlinearities. Additionally, the proposed numerical scheme is appropriate for solving dynamic equations in multi-physics coupling systems and may be used to simulate wave propagation issues with complex boundaries, including reflections and refractions at interfaces. This fact, together with its high-order accuracy suggest that the proposed computational technique is more efficient than a broad range of numerical schemes \cite{8dg,dg,14dg,48dg,54dg} deeply analyzed in the literature in computed solutions of hyperbolic systems of elastodynamic problems. It's worth mentioning that the highlights of this study considers the following three items.\\
   \begin{description}
     \item[1.] Construction of the high-order explicit computational approach for simulating the initial-boundary value problem $(\ref{1})$-$(\ref{3})$.
     \item[2.] Analysis of stability and error estimates of the developed numerical technique.
     \item[3.] Numerical examples to confirm the theoretical results and validate the accuracy and efficiency of the proposed algorithm.
   \end{description}

   The remainder of the paper is organized as follows. Section $\ref{sec2}$ considers a detailed description of the combined interpolation with finite element method for solving a three-dimensional system of nonlinear elastodynamic sine-Gordon equations $(\ref{1})$, subject to initial-boundary conditions $(\ref{2})$-$(\ref{3})$. In Section $\ref{sec3}$, we analyze under an appropriate time step restriction both stability and error estimates of the constructed computational method. Some numerical examples are carried out in Section $\ref{sec4}$ to demonstrate the utility and validity of the new algorithm while the general conclusions and future works are provided in Section $\ref{sec5}$.\\

     \section{Full description of the computational approach}\label{sec2}
     
     \text{\,\,\,\,\,\,\,\,\,\,}This section develops an explicit computational approach in an approximate solution of a three-dimensional system of nonlinear elastodynamic sine-Gordon equations $(\ref{1})$ subject to initial-boundary conditions $(\ref{2})$-$(\ref{3})$. The new algorithm derives from a combined interpolation technique with finite element formulation.\\

      Let $N>0$, be an integer and $\sigma=\frac{T}{N}$ be the time step. Set $t_{n}=n\sigma$, for $n=0,1,2,...,N$, be the discrete time and $\tau_{\sigma}=\{t_{n}=n\sigma,\text{\,\,}0\leq n\leq N\}$ be the uniform partition of the interval $[0,\text{\,}T]$. Consider $\Pi_{h}$ be a finite element method (FEM) triangulation of the domain $\overline{\Omega}=\Omega\cup\Gamma$, which consists of tetrahedra $T$, with maximum diameter $"h"$. Moreover, $h$ denotes the step size of a spatial mesh of the computational domain $\Omega\cup\Gamma$, while the finite element triangulation $\Pi_{h}$ satisfies the following assumptions: (a) the intersection of two tetrahedra is either a common face/edge or the empty set; (b) the intersection of the interior of two different tetrahedra is the empty set; (c) the interior of any tetrahedron is nonempty; (d) the triangulation $\Pi_{h}$ is uniform and the triangulation $\Pi_{\Gamma,h}$ induced on the boundary $\Gamma=\partial\Omega$ is quasi-uniform.\\

     We introduce the Sobolev spaces:
     \begin{equation*}
     \mathcal{U}=\{u=(u_{1},u_{2},u_{3})^{T}\in[H^{1}(\Omega)]^{3}:\text{\,}u|_{\Gamma}=0\};\text{\,}
     \mathcal{M}=\{\tau=(\tau_{ij})\in\mathcal{M}_{3}(L^{2}(\Omega)):\text{\,}\tau_{ij}=\tau_{ji},\text{\,}\tau_{ij}|_{\Gamma}=0\};
     \end{equation*}
     \begin{equation}\label{5}
     \mathcal{P}=\mathcal{U}\times\mathcal{M};\text{\,\,\,} W_{2}^{5}(0,T;\text{\,}H^{5})=\{v\in L^{2}(0,T;\text{\,}H^{5}):\text{\,}\frac{\partial^{m}v}{\partial t^{m}}\in L^{2}(0,T;\text{\,}H^{5}),\text{\,\,for\,\,}m=1,2,...,5\},
      \end{equation}
      where $u^{T}$ means the transpose  of the vector $u$, $\mathcal{M}_{3}(L^{2}(\Omega))$ designates the space of $3\times3$-matrices with elements in $L^{2}(\Omega)$.\\

     We equip the spaces $L^{2}(\Omega)$ and $[L^{2}(\Omega)]^{3}$ with the inner products $\left(\cdot,\cdot\right)_{0}$ and $\left(\cdot,\cdot\right)_{\bar{0}}$, and norms $\|\cdot\|_{0}$ and $\|\cdot\|_{\bar{0}}$, respectively, while the Sobolev spaces $H^{5}(\Omega)$ and $\mathcal{U}$ are equipped with the norms $\|\cdot\|_{5}$ and $\|\cdot\|_{\bar{1}}$, respectively. The above scalar products and norms are defined by:
      \begin{equation*}
     \left(w,v\right)_{0}=\int_{\Omega}wv d\Omega,\text{\,\,}\|w\|_{0}=\sqrt{\left(w,w\right)_{0}},\text{\,\,}\forall w,v\in L^{2}(\Omega);\text{\,\,\,}
     \left(u,z\right)_{\bar{0}}=\int_{\Omega}u^{T}z d\Omega,\text{\,\,}\|z\|_{\bar{0}}=\sqrt{\underset{i=1}{\overset{3}\sum}\|z_{i}\|_{0}^{2}},
     \end{equation*}
      \begin{equation}\label{6}
       \text{\,\,for\,\,} u=(u_{1},u_{2},u_{3})^{T},z=(z_{1},z_{2},z_{3})^{T}\in [L^{2}(\Omega)]^{3}; \text{\,\,}
       \|v\|_{1}=\sqrt{\|v\|_{0}^{2}+\|\nabla v\|_{\bar{0}}^{2}},\text{\,\,}\forall v\in W_{2}^{1}(\Omega).
     \end{equation}

      For $m=(m_{1},m_{2},m_{3})\in\mathbb{N}^{3}$ and $x=(x_{1},x_{2},x_{3})\in\mathbb{R}^{3}$, $D_{x}^{m}v=\frac{\partial^{|m|}v}{\partial x^{|m|}}$ and $D_{x}^{0}v:=v$, where  $|m|=\underset{l=1}{\overset{3}\sum}m_{l}$ and $\partial x^{|m|}=\partial x_{1}^{m_{1}}\partial x_{2}^{m_{2}}\partial x_{3}^{m_{3}}$. Thus
       \begin{equation}\label{7}
       \|v\|_{5}=\sqrt{\underset{|m|=0}{\overset{5}\sum}\|D_{x}^{m}v\|_{0}^{2}},\text{\,\,}\forall v\in H^{5}(\Omega)\text{\,\,and\,\,}
       \|z\|_{\bar{1}}=\sqrt{\underset{l=1}{\overset{3}\sum}\left\|z_{l}\right\|_{1}^{2}},\text{\,\,for\,\,}z=(z_{1},z_{2},z_{3})^{T}\in\mathcal{U}.
      \end{equation}

       Furthermore, we endow the space of stress tensors $\mathcal{M}$ with the inner product $\left(\cdot,\cdot\right)_{*}$ and norm $\|\cdot\|_{*}$, whereas the spaces $W_{2}^{5}(0,T;\text{\,}H^{5})$ and $\mathcal{P}=\mathcal{U}\times\mathcal{M}$ are equipped with the norms $\||\cdot|\|_{5,5}$ (also $\||\cdot|\|_{5,\infty}$) and $\||\cdot|\|_{\bar{1},*}$, respectively.
      \begin{equation*}
        \left(\tau_{1},\tau_{2}\right)_{*}=\int_{\Omega}\underset{i=1}{\overset{3}\sum}\underset{j=1}{\overset{3}\sum}(\tau_{1})_{ij}(\tau_{2})_{ij}d\Omega;\text{\,\,\,} \|\tau_{1}\|_{*}=\sqrt{\left(\tau_{1},\tau_{1}\right)_{*}},\text{\,\,for\,\,}\tau_{1}=(\tau_{1})_{ij},\tau_{2}=(\tau_{2})_{ij}\in\mathcal{M};
     \end{equation*}
      \begin{equation*}
      \||v|\|_{5,5}=\sqrt{\underset{l=0}{\overset{5}\sum}\int_{0}^{T}\left\|\frac{\partial^{l}v(t)}{\partial t^{l}}\right\|_{5}^{2}dt},\text{\,\,\,} \||v|\|_{5,\infty}=\underset{0\leq t\leq T}{\max}\|v(t)\|_{5},\text{\,\,\,}\text{\,\,}v\in W_{2}^{5}(0,T;\text{\,}H^{5});
      \end{equation*}
      \begin{equation}\label{8}
      \||(u,\tau)|\|_{\bar{1},*}=\sqrt{\|u\|_{\bar{1}}^{2}+\|\tau\|_{*}^{2}},\text{\,\,\,\,\,} (u,\tau)\in\mathcal{P}.
      \end{equation}

      Let $\mathcal{U}_{h}$ and $\mathcal{M}_{h}$ be the finite element spaces approximating the solution of the initial-boundary value problem $(\ref{1})$-$(\ref{3})$.
      \begin{equation}\label{9}
      \mathcal{U}_{h}=\{u_{h}(t)\in\mathcal{U},\text{\,}u_{h}(t)|_{T}\in[\mathcal{Q}_{5}(T)]^{3},\text{\,}\forall t\in[0,\text{\,}T],\text{\,\,}\forall T\in\Pi_{h}\},
      \end{equation}
      where $\mathcal{Q}_{5}(T)$ represents the set of all polynomials defined on $T$ with degree less than or equal $5$;
      \begin{equation}\label{10}
      \mathcal{M}_{h}=\{\tau_{h}(t):=\tau(u_{h}(t))\in\mathcal{M},\text{\,}u_{h}(t)\in\mathcal{U}_{h},\text{\,\,for\,\,}0\leq t\leq T\}.
      \end{equation}

      Finally, consider the bilinear operator $B(\cdot,\cdot)$ defined as
      \begin{equation}\label{11}
      B(u,v)=(\lambda_{1}+\lambda_{2})\left(\overline{\nabla}u,\overline{\nabla}v\right)_{*}+\lambda_{2}\left(\nabla\cdot u,\nabla\cdot v\right)_{0}, \text{\,\,\,for\,\,\,}u,v\in\mathcal{U}.
      \end{equation}
      
      We remind that $\overline{\nabla}w$ and $\nabla\cdot w$ are the Jacobian matrix and the divergence of the vector $w$. For a vector $v\in \mathcal{U}$ and a stress tensor $\tau\in\mathcal{M}(L^{2}(\Omega))$, the Green formula is given by
      \begin{equation}\label{12}
      \left(\nabla\cdot\tau,v\right)_{\bar{0}}=-\left(\tau,\overline{\nabla}v\right)_{*}+\int_{\Gamma}(\tau\vec{z})^{t}vd\Gamma,
      \end{equation}
      where $\vec{z}$ designates the unit outward normal vector on $\Gamma$, $w^{T}$ is the transpose of a vector $w$, $"\nabla\cdot"$ denotes the divergence operator and $\overline{\nabla}v$ is the Jacobian matrix of the vector $v$. The operators $\nabla\cdot$ and $\overline{\nabla}$ are defined as
      \begin{equation}\label{13}
      (\nabla\cdot\tau)_{j}=\underset{i=1}{\overset{3}\sum}\frac{\partial\tau_{ij}}{\partial x_{i}}\text{\,\,\,\,and\,\,\,}\overline{\nabla}v=\left(\frac{\partial v_{i}}{\partial x_{j}}\right)_{ij},\text{\,\,\,}1\leq i,j\leq3.
      \end{equation}

      Since the formulas can become quite heavy, for the convenience of writing, we set $u:=u(t)\in\mathcal{U}$ and $\tau:=\tau(t)\in\mathcal{M}$, $\forall t\in[0,\text{\,}T]$.      Integrating the first equation in system $(\ref{1})$ on the interval $[t_{n-1},\text{\,}t_{n+1}]$ and rearranging terms, this provides
      \begin{equation*}
      \rho(u^{n+1}_{t}-u^{n-1}_{t})=\int_{t_{n-1}}^{t_{n+1}}[(\lambda_{1}+\lambda_{2})\nabla(\nabla\cdot u)+\lambda_{2}\nabla\cdot\overline{\nabla}u+F(u)]dt.
      \end{equation*}
      
      Multiplying both sides of this equation by $w\in\mathcal{U}$ and using the scalar product $(\cdot,\cdot)_{\bar{0}}$ defined in relation $(\ref{6})$, we obtain
      \begin{equation*}
      \rho\left(u^{n+1}_{t}-u^{n-1}_{t},w\right)_{\bar{0}}=\left(\int_{t_{n-1}}^{t_{n+1}}[(\lambda_{1}+\lambda_{2})\nabla(\nabla\cdot u)+\lambda_{2}\nabla\cdot\overline{\nabla}u+F(u)]dt,w\right)_{\bar{0}},
      \end{equation*}
      which is equivalent to
      \begin{equation}\label{14}
      \left(u^{n+1}_{t}-u^{n-1}_{t},w\right)_{\bar{0}}=\frac{1}{\rho}\int_{t_{n-1}}^{t_{n+1}}\left[(\lambda_{1}+\lambda_{2})\left(\nabla(\nabla\cdot u),w\right)_{\bar{0}}+\lambda_{2}\left(\nabla\cdot\overline{\nabla}u,w\right)_{\bar{0}}+\left(F(u),w\right)_{\bar{0}}\right]dt.
      \end{equation}

      Since $u,w\in\mathcal{U}$, so $u|_{\Gamma}=w|_{\Gamma}=0$. Thus, the application of the Green formula $(\ref{12})$ gives
      \begin{equation}\label{15}
      \left(\nabla\cdot\overline{\nabla}u,w\right)_{\bar{0}}=-\left(\overline{\nabla}u,\overline{\nabla}w\right)_{*}+\int_{\Gamma}(\overline{\nabla}u\vec{z})^{t}wd\Gamma=
      -\left(\overline{\nabla}u,\overline{\nabla}w\right)_{*}.
      \end{equation}
      
      But it has been established in \cite{enarxiv}, page 10, that
      \begin{equation}\label{16}
      \left(\nabla(\nabla\cdot u),w\right)_{\bar{0}}=-\left(\nabla\cdot u,\nabla\cdot w\right)_{0}.
      \end{equation}
      
      Plugging equations $(\ref{15})$ and $(\ref{16})$ into equation $(\ref{14})$ and rearranging terms yield
      \begin{equation*}
      \left(u^{n+1}_{t}-u^{n-1}_{t},w\right)_{\bar{0}}=-\frac{1}{\rho}\left[(\lambda_{1}+\lambda_{2})\left(\int_{t_{n-1}}^{t_{n+1}}\nabla\cdot udt,\nabla\cdot w\right)_{\bar{0}}+\lambda_{2}\left(\int_{t_{n-1}}^{t_{n+1}}\overline{\nabla}udt,\overline{\nabla}w\right)_{\bar{0}}-\right.
      \end{equation*}
      \begin{equation}\label{17}
      \left.\left(\int_{t_{n-1}}^{t_{n+1}}F(u)dt,w\right)_{\bar{0}}\right].
      \end{equation}

      To get a temporal second-order explicit numerical scheme, the term $u_{t}$ should be approximated at the points $t_{n-1}$, $t_{n}$ and $t_{n+1}$, while the integrands $\nabla\cdot u$, $\nabla u$ and $F(u)$ must be interpolated at the points $t_{n-1}$ and $t_{n}$. The approximation of the vector function $u(t)$ at the points $t_{n-1}$, $t_{n}$ and $t_{n+1}$, results in
      \begin{equation*}
        u(t)=\frac{(t-t_{n-1})(t-t_{n})}{(t_{n+1}-t_{n-1})(t_{n+1}-t_{n})}u^{n+1}+\frac{(t-t_{n-1})(t-t_{n+1})}{(t_{n}-t_{n-1})(t_{n}-t_{n+1})}u^{n}+
       \end{equation*}
       \begin{equation*}
       \frac{(t-t_{n})(t-t_{n+1})}{(t_{n-1}-t_{n})(t_{n-1}-t_{n+1})}u^{n-1}+\frac{1}{6}(t-t_{n-1})(t-t_{n})(t-t_{n+1})u_{3t}(\epsilon(t)),
       \end{equation*}
       where $\epsilon(t)$ is between the minimum and maximum of $t_{n-1}$, $t_{n}$, $t_{n+1}$ and $t$. This is equivalent to
       \begin{equation*}
        u(t)=\frac{1}{2\sigma^{2}}[(t-t_{n-1})(t-t_{n})u^{n+1}-2(t-t_{n-1})(t-t_{n+1})u^{n}+(t-t_{n})(t-t_{n+1})u^{n-1}]+
       \end{equation*}
       \begin{equation*}
       \frac{1}{6}(t-t_{n-1})(t-t_{n})(t-t_{n+1})u_{3t}(\epsilon(t)).
       \end{equation*}
       
       The time derivative provides
       \begin{equation*}
        u_{t}(t)=\frac{1}{2\sigma^{2}}[(2t-t_{n-1}-t_{n})u^{n+1}-2(2t-t_{n-1}-t_{n+1})u^{n}+(2t-t_{n}-t_{n+1})u^{n-1}]+
       \end{equation*}
       \begin{equation*}
       \frac{1}{6}\{(t-t_{n-1})(t-t_{n})(t-t_{n+1})\frac{d}{dt}(u_{3t}(\epsilon(t)))+u_{3t}(\epsilon(t))\frac{d}{dt}[(t-t_{n-1})(t-t_{n})(t-t_{n+1})]\}.
       \end{equation*}
       
        Applying $u_{t}(t)$ at the discrete points $t_{n-1}$ and $t_{n+1}$ and performing direct calculations to obtain
       \begin{equation}\label{18}
        u^{n+1}_{t}-u^{n-1}_{t}=\frac{2}{\sigma}(u^{n+1}-2u^{n}+u^{n-1})+\frac{\sigma^{2}}{3}(u_{3t}(\epsilon(t_{n+1}))-u_{3t}(\epsilon(t_{n-1}))).
      \end{equation}

      The application of the Mean Value Theorem gives
      \begin{equation}\label{19}
      u_{3t}(\epsilon(t_{n+1}))-u_{3t}(\epsilon(t_{n-1}))=(\epsilon(t_{n+1})-\epsilon(t_{n-1}))u_{4t}(\overline{\epsilon}),
      \end{equation}
      where $\overline{\epsilon}\in(\min\{\epsilon(t_{n+1}),\epsilon(t_{n-1})\},\text{\,}\max\{\epsilon(t_{n+1}),\epsilon(t_{n-1})\})$. But, $\epsilon(t_{n+1}),\epsilon(t_{n-1})\in(t_{n-1},t_{n+1})$, so $-2\sigma<\epsilon(t_{n+1})-\epsilon(t_{n-1})<2\sigma$. Utilizing this, along with approximation $(\ref{19})$, equation $(\ref{18})$ becomes
      \begin{equation}\label{20}
      u^{n+1}_{t}-u^{n-1}_{t}=\frac{2}{\sigma}(u^{n+1}-2u^{n}+u^{n-1})+\frac{\sigma^{2}}{3}(\epsilon(t_{n+1})-\epsilon(t_{n-1}))u_{4t}(\overline{\epsilon}),
      \end{equation}
      where
      \begin{equation}\label{20a}
      \|(\epsilon(t_{n+1})-\epsilon(t_{n-1}))u_{4t}(\overline{\epsilon})\|_{\bar{0}}\leq2\sigma\|u_{4t}(\overline{\epsilon})\|_{\bar{0}}.
      \end{equation}

      In addition,
      \begin{equation*}
      \nabla\cdot u(t)=\frac{t-t_{n-1}}{t_{n}-t_{n-1}} \nabla\cdot u^{n}+\frac{t-t_{n}}{t_{n-1}-t_{n}} \nabla\cdot u^{n-1}+\frac{1}{2}(t-t_{n-1})(t-t_{n}) \nabla\cdot u_{2t}(\epsilon_{1}(t))=
       \end{equation*}
       \begin{equation}\label{21}
      \frac{1}{\sigma}[(t-t_{n-1})\nabla\cdot u^{n}-(t-t_{n})\nabla\cdot u^{n-1}]+\frac{1}{2}(t-t_{n-1})(t-t_{n}) \nabla\cdot u_{2t}(\epsilon_{1}(t)),
      \end{equation}
      where $\epsilon(t)$ is between the minimum and maximum of $t_{n-1}$, $t_{n}$ and $t$. Since the function $t\mapsto\nabla\cdot u_{2t}(\epsilon_{1}(\cdot))$ is continuous on $[0,\text{\,}T]$ and the function $t\mapsto\nabla\cdot (t-t_{n-1})(t-t_{n})$ does not change sign on the intervals $[t_{n-1},\text{\,}t_{n}]$ and $[t_{n},\text{\,}t_{n+1}]$, integrating both sides of approximations $(\ref{21})$ and utilizing the Weight Mean Value theorem, this yields
      \begin{equation*}
      \int_{t_{n-1}}^{t_{n+1}}\nabla\cdot u(t)dt=\frac{1}{\sigma}\left[\nabla\cdot u^{n}\int_{t_{n-1}}^{t_{n+1}}(t-t_{n-1})dt-\nabla\cdot u^{n-1}\int_{t_{n-1}}^{t_{n+1}}(t-t_{n})dt\right]+
      \end{equation*}
      \begin{equation*}
      \frac{1}{2}\int_{t_{n-1}}^{t_{n+1}}(t-t_{n-1})(t-t_{n}) \nabla\cdot u_{2t}(\epsilon_{1}(t))dt=\frac{1}{2\sigma}\left(\nabla\cdot u^{n}[(t-t_{n-1})^{2}]_{t_{n-1}}^{t_{n+1}}
      -\nabla\cdot u^{n-1}[(t-t_{n})^{2}]_{t_{n-1}}^{t_{n+1}}\right)+
      \end{equation*}
      \begin{equation*}
      \frac{1}{2}\left(\nabla\cdot u_{2t}(\overline{\epsilon}_{1})\int_{t_{n-1}}^{t_{n}}(t-t_{n-1})(t-t_{n})dt+\nabla\cdot u_{2t}(\overline{\overline{\epsilon}}_{1})
      \int_{t_{n}}^{t_{n+1}}(t-t_{n-1})(t-t_{n})dt\right)=
      \end{equation*}
      \begin{equation}\label{22}
      2\sigma\nabla\cdot u^{n}+\frac{\sigma^{3}}{12}(5\nabla\cdot u_{2t}(\overline{\overline{\epsilon}}_{1})-\nabla\cdot u_{2t}(\overline{\epsilon}_{1})),
      \end{equation}
      where $\overline{\epsilon}_{1}\in(t_{n-1},t_{n})$ and $\overline{\overline{\epsilon}}_{1}\in(t_{n},t_{n+1})$.\\

      In a similar manner, one easily shows that
      \begin{equation}\label{23}
      \int_{t_{n-1}}^{t_{n+1}}\overline{\nabla}u(t)dt=2\sigma\overline{\nabla}u^{n}+\frac{\sigma^{3}}{12}(5\overline{\nabla}u_{2t}(\overline{\overline{\epsilon}}_{2})-\overline{\nabla} u_{2t}(\overline{\epsilon}_{2})),
      \end{equation}
      where $\overline{\epsilon}_{2}\in(t_{n-1},t_{n})$ and $\overline{\overline{\epsilon}}_{2}\in(t_{n},t_{n+1})$.
      \begin{equation}\label{24}
      \int_{t_{n-1}}^{t_{n+1}}F(u)(t)dt=2\sigma F(u^{n})+\frac{\sigma^{3}}{12}[5(F(u))_{2t}(\overline{\overline{\epsilon}}_{3})-(F(u))_{2t}(\overline{\epsilon}_{3})],
      \end{equation}
      where $\overline{\epsilon}_{3}\in(t_{n-1},t_{n})$ and $\overline{\overline{\epsilon}}_{3}\in(t_{n},t_{n+1})$. It's worth noticing to recall that $F(u)\in[W_{2}^{3}(0,T;\text{\,}H^{3})]^{3}$. Substituting approximations $(\ref{20})$, $(\ref{22})$, $(\ref{23})$ and $(\ref{24})$ into equation $(\ref{17})$, and rearranging terms, this results in
      \begin{equation*}
      \left(u^{n+1}-2u^{n}+u^{n-1},w\right)_{\bar{0}}=-\frac{\sigma^{2}}{\rho}\left[(\lambda_{1}+\lambda_{2})\left(\nabla\cdot u^{n},\nabla\cdot w\right)_{0}+
      \lambda_{2}\left(\overline{\nabla}u^{n},\overline{\nabla}w\right)_{*}-\left(F(u^{n}),w\right)_{\bar{0}}\right]-
      \end{equation*}
      \begin{equation*}
      \frac{\sigma^{3}}{3}\left((\epsilon(t_{n+1})-\epsilon(t_{n-1}))u_{4t}(\overline{\epsilon}),w\right)_{\bar{0}}+\frac{(\lambda_{1}+\lambda_{2})\sigma^{4}}{24\rho}\left(\nabla\cdot (u_{2t}(\overline{\epsilon}_{1})-5u_{2t}(\overline{\overline{\epsilon}}_{1})),\nabla\cdot w\right)_{0}+
      \end{equation*}
      \begin{equation}\label{25}
      \frac{\lambda_{2}\sigma^{4}}{24\rho}\left(\overline{\nabla}(u_{2t}(\overline{\epsilon}_{2})-5u_{2t}(\overline{\overline{\epsilon}}_{2})),\overline{\nabla}w\right)_{*}+
      \frac{\sigma^{4}}{24\rho}\left(5(F(u))_{2t}(\overline{\overline{\epsilon}}_{3})-(F(u))_{2t}(\overline{\epsilon}_{3}),w\right)_{\bar{0}}.
      \end{equation}
      
      Since $\overline{\epsilon}_{1},\overline{\epsilon}_{2},\overline{\epsilon}_{3}\in(t_{n-1},t_{n})$ and $\overline{\overline{\epsilon}}_{1},\overline{\overline{\epsilon}}_{2},\overline{\overline{\epsilon}}_{3}\in(t_{n},t_{n+1})$, without loss of this generality and by the sake of readability, we should assume that $\overline{\epsilon}_{1}=\overline{\epsilon}_{2}=\overline{\epsilon}_{3}=\theta_{1}$ and $\overline{\overline{\epsilon}}_{1}=\overline{\overline{\epsilon}}_{2}= \overline{\overline{\epsilon}}_{3}=\theta_{2}$. However, this assumption will not compromise neither the result on stability nor the one on error estimates. Now, utilizing the bilinear operator $B$ given in equation $(\ref{11})$, it is easy to see that
      \begin{equation*}
      \left(u^{n+1},w\right)_{\bar{0}}=\left(2u^{n}-u^{n-1},w\right)_{\bar{0}}-\frac{\sigma^{2}}{\rho}[B(u^{n},w)-\left(F(u^{n}),w\right)_{\bar{0}}]+
      \frac{\sigma^{4}}{24\rho}B(u_{2t}(\theta_{1})-5u_{2t}(\theta_{2}),w)+
      \end{equation*}
      \begin{equation}\label{26}
      \frac{\sigma^{4}}{24\rho}\left(5(F(u))_{2t}(\theta_{2})-(F(u))_{2t}(\theta_{1}),w\right)_{\bar{0}}-\frac{\sigma^{3}}{3}\left((\epsilon(t_{n+1})-\epsilon(t_{n-1}))
      u_{4t}(\overline{\epsilon}),w\right)_{\bar{0}}.
      \end{equation}

      Neglecting the error term: $\frac{\sigma^{4}}{24\rho}B(u_{2t}(\theta_{1})-5u_{2t}(\theta_{2}),w)+\frac{\sigma^{4}}{24\rho}\left(5(F(u))_{2t}(\theta_{2})-
      (F(u))_{2t}(\theta_{1}),w\right)_{\bar{0}}-\\      \frac{\sigma^{3}}{3}\left((\epsilon(t_{n+1})-\epsilon(t_{n-1}))u_{4t}(\overline{\epsilon}),w\right)_{\bar{0}}$, replacing the analytical solution $u$ with the computed one $u_{h}\in\mathcal{U}_{h}$, equation $(\ref{26})$ can be approximated as
      \begin{equation}\label{27}
      \left(u_{h}^{n+1},w\right)_{\bar{0}}=\left(2u_{h}^{n}-u_{h}^{n-1},w\right)_{\bar{0}}-\frac{\sigma^{2}}{\rho}[B(u_{h}^{n},w)-\left(F(u_{h}^{n}),w\right)_{\bar{0}}].
      \end{equation}

      In addition, multiplying the second equation in system $(\ref{1})$ by $\tau\in\mathcal{M}$ and utilizing the inner product $\left(\cdot,\cdot\right)_{*}$, given in relation $(\ref{8})$ yield
      \begin{equation}\label{28}
      \left(\psi^{n},\tau\right)_{*}=\lambda_{1}\left((\nabla\cdot u_{h}^{n})\mathcal{I},\tau\right)_{*}+\lambda_{2}\left(\nabla u_{h}^{n}+(\nabla u_{h}^{n})^{T},\tau\right)_{*}.
      \end{equation}

      A combination of approximations $(\ref{27})$-$(\ref{28})$ provides the desired algorithm, that is, given $(u_{h}^{n},\psi_{h}^{n}),$ $(u_{h}^{n-1},\psi_{h}^{n-1})\in\mathcal{P}_{h}=\mathcal{U}_{h}\times\mathcal{M}_{h}$, find $(u_{h}^{n+1},\psi_{h}^{n+1})\in\mathcal{P}_{h}$, for $n=1,2,...,N-1$, so that
      \begin{equation}\label{s1}
      \left(u_{h}^{n+1},w\right)_{\bar{0}}=\left(2u_{h}^{n}-u_{h}^{n-1},w\right)_{\bar{0}}-\frac{\sigma^{2}}{\rho}[B(u_{h}^{n},w)-\left(F(u_{h}^{n}),w\right)_{\bar{0}}],\text{\,\,\,\,}\forall w\in \mathcal{U},
      \end{equation}
      \begin{equation}\label{s2}
      \left(\psi_{h}^{n+1},\tau\right)_{*}=\lambda_{1}\left((\nabla\cdot u_{h}^{n+1})\mathcal{I},\tau\right)_{*}+\lambda_{2}\left(\nabla u_{h}^{n+1}+(\nabla u_{h}^{n+1})^{T},\tau\right)_{*}, \text{\,\,\,\,}\forall \tau\in \mathcal{M},
      \end{equation}
      where $\mathcal{W}$ and $\mathcal{M}$ are defined in equation $(\ref{5})$, with initial conditions
      \begin{equation}\label{s3}
      u_{h}^{0}=u_{0},\text{\,\,}u_{h}^{1}=\widetilde{v},\text{\,\,}\psi_{h}^{0}=\lambda_{1}(\nabla\cdot u_{0})\mathcal{I}+\lambda_{2}(\nabla u_{0}+(\nabla u_{0})^{T}),
      \text{\,\,}\psi_{h}^{1}=\lambda_{1}(\nabla\cdot u_{h}^{1})\mathcal{I}+\lambda_{2}(\nabla u_{h}^{1}+(\nabla u_{h}^{1})^{T}), \text{\,\,on\,\,}\overline{\Omega}=\Omega\cup\Gamma,
      \end{equation}
      and boundary condition
      \begin{equation}\label{s4}
      u_{h}^{n}=0,\text{\,\,\,\,},\psi_{h}^{n}=0,\text{\,\,\,\,for\,\,\,\,}n=0,1,...,N,\text{\,\,\,\,}\text{\,\,\,\,on\,\,\,\,\,}\Gamma,
      \end{equation}
      where
      \begin{equation}\label{s5}
      \widetilde{v}=u_{0}+\sigma u_{t}^{0}=u_{0}+\sigma v,
      \end{equation}
      is the second order approximation of the term $u^{1}=u(t_{1})$ obtained by the use of the Taylor series expansion.\\

       It's worth mentioning that the second equation in $(\ref{1})$ indicates that the stress tensor $\psi$ does not deal with a differential equation and should be obtained by simple calculations once the displacement $u_{h}$ is known. For this reason, the analysis of stability and error estimates of the developed computational technique $(\ref{s1})$-$(\ref{s5})$ will be restricted to equations satisfied by the displacement $u_{h}$. Additionally, both stability and error estimates of the proposed explicit approach will be analyzed under the following assumptions.
       \begin{description}
       \item[(i)] The generalized sequence $\{\mathcal{U}_{h}\}_{h>0}$, of finite element subspaces approximating $\mathcal{U}$ with order $O(h)$ are utilized in the fluid region. As discussed in \cite{1enarxiv}, the corresponding inverse inequality is given by
         \begin{equation}\label{31}
       \left\|\frac{\partial w}{\partial x_{l}}\right\|_{\bar{0}}\leq C_{p}h^{-1}\|w\|_{\bar{0}},\text{\,\,\,\,for\,\,\,\,}l=1,2,3,\text{\,\,\,}w\in\mathcal{U},
      \end{equation}
      where $C_{p}>0$ is a constant which does not depend on the time step $\sigma$ and space step $h$.
         \item[(ii)] The time step $\sigma$ and the mesh grid $h$ satisfy the following requirement
         \begin{equation}\label{32}
          \sigma\leq C_{ts}h,\text{\,\,\,\,}\text{\,\,\,\,where\,\,\,\,\,}0<C_{ts}<\sqrt{2\rho C_{1}},
      \end{equation}
      where $C_{1}=\frac{1}{18}C_{p}^{-2}(\lambda_{1}+\lambda_{2})^{-1}$. Since the right side of estimate $(\ref{32})$ depends on the mesh size $"h"$, the developed numerical scheme is stable under an appropriate time step limitation. Additionally, when $\rho\geq1$, this estimate shows that the new algorithm advances the computed solution with a maximum allowable time step. Finally, the time step limitation is more attractive and it is well known in the literature as the Courant-Friedrichs-Lewy (CFL) condition for stability of explicit numerical methods applied to hyperbolic PDEs.
         \item[(iii)] The nonlinear source term $F(u)$ falls in the space $[W^{3}_{2}(0,T;\text{\,}H^{3})]^{3}$. Using the first equation in system $(\ref{1})$, this suggests that the exact solution $u=(u_{1},u_{2},u_{3})^{T}\in[W^{5}_{2}(0,T;\text{\,}H^{5})]^{3}$. Moreover, there exists a positive constant $\widehat{C}$, so that
       \begin{equation}\label{33}
       \||u|\|_{\overline{5},5}=\sqrt{\underset{l=1}{\overset{3}\sum}\||u_{l}|\|_{5,5}^{2}}\leq\widehat{C}.
      \end{equation}

      In addition, we assume that the solution of the second equation in system $(\ref{1})$, $\psi\in L^{2}(0,T;\text{\,}\mathcal{M}_{3}(H^{5}))$ and the finite element spaces $\mathcal{U}_{h}$ and $\mathcal{M}_{h}$ defined in equations $(\ref{9})$-$(\ref{10})$ satisfy the approximation properties of piecewise polynomials of degrees $4$ and $5$. That is,
      \begin{equation*}
       \underset{u_{h}\in\mathcal{U}_{h}}{\inf}\|u_{h}-u\|_{\bar{0}}\leq \gamma_{0}h^{5}\|u\|_{[H^{5}]^{3}},
      \end{equation*}
      \begin{equation*}
       \underset{u_{h}\in\mathcal{U}_{h}}{\inf}\|u_{h}-u\|_{B}\leq \gamma_{1}h^{4}\|u\|_{[H^{4}]^{3}},
       \end{equation*}
       \begin{equation}\label{33a}
       \underset{\psi_{h}\in\mathcal{M}_{h}}{\inf}\|\psi_{h}-\psi\|_{*}\leq \gamma_{2}h^{5}\|\psi\|_{\mathcal{M}_{3}(H^{5})},
      \end{equation}
      where $\gamma_{l}$, for $l=0,1,2,$ are positive constants which do not depend on the mesh grid $h$ and time step $\sigma$.
       \end{description}
       
      The following Poincar\'{e}-Friedrichs inequality will play a crucial role in the analysis of stability and error estimates.
      \begin{equation}\label{34}
      \|v\|_{0}\leq C_{\Omega}\|\nabla v\|_{\bar{0}},\text{\,\,\,\,\,\,}\forall v\in W_{2}^{1}(\Omega),
      \end{equation}
      where $C_{\Omega}$, is a positive constant independent of the mesh grid $h$ and time step $\sigma$.

     \section{Stability analysis and error estimates of the new algorithm}\label{sec3}
     
     \text{\,\,\,\,\,\,\,\,\,\,}This Section considers the stability and error estimates of the developed explicit computational approach $(\ref{s1})$-$(\ref{s5})$ applied to the three-dimensional nonlinear elastodynamic sine-Gordon problem $(\ref{1})$-$(\ref{3})$. Since the stress tensor $"\psi"$ is not associated with a differential equation and should be directly computed as a function of the displacement $"u"$, the analysis on stability and error estimates will be restricted on the equations satisfied by the displacement $u_{h}$.

     \begin{lemma}\label{l1}
     For any $v,w\in\mathcal{U}$, the bilinear form $B(\cdot,\cdot)$ defined by equation $(\ref{11})$ is symmetric and satisfies the following estimates
     \begin{equation}\label{37}
     B(v,w)\leq (\lambda_{1}+4\lambda_{2})\|v\|_{\bar{1}}\|w\|_{\bar{1}}\text{\,\,\,\,\,and\,\,\,\,\,}B(v,v)\geq\frac{\lambda_{1}+\lambda_{2}}{2C_{\Omega}}
     \min\{1,C_{\Omega}\}\|v\|_{\bar{1}}^{2},
     \end{equation}
     where $\lambda_{1}$ and $\lambda_{2}$ are the nonnegative parameters given in equation $(\ref{1})$ and $C_{\Omega}$ is the positive constant given in estimate $(\ref{34})$.
     \end{lemma}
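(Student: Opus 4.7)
The proof will split into three pieces, one for each of symmetry, continuity, and coercivity. Symmetry is immediate: both $(\cdot,\cdot)_*$ on $\mathcal{M}$ and $(\cdot,\cdot)_0$ on $L^{2}(\Omega)$ are symmetric inner products, so swapping the arguments of $B$ in its defining formula $(\ref{11})$ leaves the value unchanged.

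For the upper bound, my plan is to apply Cauchy--Schwarz term by term and then control each of $\|\overline{\nabla}v\|_*$ and $\|\nabla\cdot v\|_0$ by $\|v\|_{\bar{1}}$. The first control is essentially the definition of the norms: expanding $\|\overline{\nabla}v\|_*^2=\sum_{l=1}^{3}\|\nabla v_l\|_{\bar{0}}^2$ and comparing with $\|v\|_{\bar{1}}^2=\sum_l(\|v_l\|_0^2+\|\nabla v_l\|_{\bar 0}^2)$ immediately gives $\|\overline{\nabla}v\|_*\le\|v\|_{\bar 1}$. For $\|\nabla\cdot v\|_0$, I would apply the elementary inequality $\bigl(\sum_{l=1}^3 a_l\bigr)^2\le 3\sum_l a_l^2$ pointwise to the divergence, yielding $\|\nabla\cdot v\|_0^2\le 3\sum_l\|\partial v_l/\partial x_l\|_0^2\le 3\|v\|_{\bar 1}^2$. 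Bundling the contributions gives the factor $(\lambda_1+\lambda_2)+3\lambda_2=\lambda_1+4\lambda_2$, as claimed.

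The coercivity is the main substantive step. First, since $\lambda_2\|\nabla\cdot v\|_0^2\ge 0$, it suffices to bound $B(v,v)$ from below by $(\lambda_1+\lambda_2)\|\overline{\nabla}v\|_*^2$. To convert $\|\overline{\nabla}v\|_*^2$ to $\|v\|_{\bar 1}^2$, I would apply the Poincar\'e--Friedrichs inequality $(\ref{34})$ componentwise to each $v_l\in W_2^1(\Omega)$ (legitimate since $v\in\mathcal{U}$ vanishes on $\Gamma$), giving $\|v_l\|_0^2\le C_\Omega^2\|\nabla v_l\|_{\bar 0}^2$, hence after summation $\|v\|_{\bar 0}^2\le C_\Omega^2\|\overline{\nabla}v\|_*^2$ and therefore $\|v\|_{\bar 1}^2\le(1+C_\Omega^2)\|\overline{\nabla}v\|_*^2$.

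The fiddly bit, and where I expect the main obstacle, is turning the clean constant $\frac{1}{1+C_\Omega^2}$ into the case-split form $\frac{1}{2C_\Omega}\min\{1,C_\Omega\}$ stated in $(\ref{37})$. I would handle this by splitting: if $C_\Omega\le 1$, then $1+C_\Omega^2\le 2$ gives $\|\overline{\nabla}v\|_*^2\ge\tfrac12\|v\|_{\bar 1}^2=\tfrac{C_\Omega}{2C_\Omega}\|v\|_{\bar 1}^2$; and if $C_\Omega>1$, combine $1+C_\Omega^2\le 2C_\Omega^2$ with $C_\Omega\le C_\Omega^{2}$ so as to recover a bound of the form $\tfrac{1}{2C_\Omega}\|v\|_{\bar 1}^2$ (this last inequality is where the estimate is least sharp, and I would check that this matches the precise constant written in $(\ref{37})$ or else weaken to the equivalent form that the argument genuinely produces). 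Multiplying by $\lambda_1+\lambda_2$ then yields the stated lower bound.
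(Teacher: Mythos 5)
Your proposal is considerably more complete than the paper's own proof: the paper verifies only the symmetry of $B(\cdot,\cdot)$ (exactly as you do, via the symmetry of the two inner products) and then refers the reader to the cited work of Chen and Zhao for both estimates in $(\ref{37})$, so your continuity and coercivity arguments supply material the paper omits entirely. Your continuity bound is correct and lands exactly on the stated constant: Cauchy--Schwarz in each term, $\|\overline{\nabla}v\|_{*}\le\|v\|_{\bar{1}}$ from the definitions, and $\|\nabla\cdot v\|_{0}^{2}\le 3\|v\|_{\bar{1}}^{2}$ from the discrete Cauchy--Schwarz inequality give $(\lambda_{1}+\lambda_{2})+3\lambda_{2}=\lambda_{1}+4\lambda_{2}$. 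The overall strategy for coercivity (discard the nonnegative divergence term and apply Poincar\'e--Friedrichs componentwise, legitimate because elements of $\mathcal{U}$ vanish on $\Gamma$) is also the natural one.

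However, the coercivity constant does not come out as claimed, and the patch you sketch for the case $C_{\Omega}>1$ goes the wrong way. Squaring $(\ref{34})$ gives $\|v\|_{\bar{0}}^{2}\le C_{\Omega}^{2}\|\overline{\nabla}v\|_{*}^{2}$ and hence $\|\overline{\nabla}v\|_{*}^{2}\ge\frac{1}{1+C_{\Omega}^{2}}\|v\|_{\bar{1}}^{2}$. For $C_{\Omega}\le1$ this dominates $\frac{1}{2}\|v\|_{\bar{1}}^{2}$ and matches $(\ref{37})$. But for $C_{\Omega}>1$ the target constant is $\frac{1}{2C_{\Omega}}$, and $\frac{1}{1+C_{\Omega}^{2}}\le\frac{1}{2C_{\Omega}}$ by AM--GM (equality only at $C_{\Omega}=1$), so the bound you derive is strictly weaker than the one claimed; the chain $1+C_{\Omega}^{2}\le2C_{\Omega}^{2}$ together with $C_{\Omega}\le C_{\Omega}^{2}$ only yields the constant $\frac{1}{2C_{\Omega}^{2}}$, not $\frac{1}{2C_{\Omega}}$. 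The stated constant $\frac{\min\{1,C_{\Omega}\}}{2C_{\Omega}}=\frac{1}{2\max\{1,C_{\Omega}\}}$ is what your argument would produce if the Poincar\'e inequality were available in the unsquared form $\|v\|_{0}^{2}\le C_{\Omega}\|\nabla v\|_{\bar{0}}^{2}$, since then $1+C_{\Omega}\le2\max\{1,C_{\Omega}\}$; with $(\ref{34})$ as actually written, your derivation establishes the lemma only with $\frac{1}{2C_{\Omega}}$ replaced by $\frac{1}{2C_{\Omega}^{2}}$ (equivalently, with $1+C_{\Omega}^{2}$ in the denominator). Since the paper never proves the estimate itself, this is arguably a defect in how the constant is recorded rather than in your method, but as written your argument does not establish $(\ref{37})$ verbatim when $C_{\Omega}>1$.
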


   \begin{proof}
    Utilizing equation $(\ref{11})$, it holds
    \begin{equation*}
    B(v,w)=(\lambda_{1}+\lambda_{2})\left(\overline{\nabla}v,\overline{\nabla}w\right)_{*}+\lambda_{2}\left(\nabla\cdot v,\nabla\cdot w\right)_{0} =(\lambda_{1}+\lambda_{2})\left(\overline{\nabla}w,\overline{\nabla}v\right)_{*}+\lambda_{2}\left(\nabla\cdot w,\nabla\cdot v\right)_{0}=B(w,v).
   \end{equation*}
   
   Thus, the bilinear operator $B(\cdot,\cdot)$ is symmetric. Regarding the proof of the estimates provided in relation $(\ref{37})$, we refer the readers to \cite{1enarxiv}.
   This ends the proof of Lemma $\ref{l1}$.
   \end{proof}
   It's worth mentioning that Lemma $\ref{l1}$ shows that the bilinear operator $B(\cdot,\cdot)$ defines a scalar product on the Sobolev space $\mathcal{U}\times\mathcal{U}$. Let $\|\cdot\|_{B}$ be the norm associated with the scalar product $B(\cdot,\cdot)$. Hence, the norm $\|\cdot\|_{B}$ is defined as: for $v\in\mathcal{U}$,
   \begin{equation}\label{38}
    \|v\|_{B}^{2}=B(v,v)=(\lambda_{1}+\lambda_{2})\|\overline{\nabla}v\|_{*}^{2}+\lambda_{2}\|\nabla\cdot v\|_{0}^{2}.
   \end{equation}

   We introduce the norm, $\|\cdot\|_{\bar{0},B}$, defined on $\mathcal{U}$ as:
   \begin{equation}\label{38a}
    \|v\|_{\bar{0},B}=\sqrt{\|v\|_{\bar{0}}^{2}+\|v\|_{B}^{2}},\text{\,\,\,\,\,}\forall v\in\mathcal{U}.
   \end{equation}

     \begin{theorem} \label{t1} (Stability analysis and error estimates).
      Consider $u\in [W^{5}_{2}(0,T;\text{\,}H^{5})]^{3}$, be the exact solution of the initial-boundary value problem $(\ref{1})$-$(\ref{3})$ and let $u_{h}(t)\in\mathcal{U}_{h}$, for $t\in[0,\text{\,}T]$, be the approximate solution provided by the proposed approach $(\ref{s1})$-$(\ref{s5})$. Under the time step requirement $(\ref{32})$, the following estimates hold
     \begin{equation*}
     \|u_{h}^{n+1}\|_{\bar{0},B}\leq \|u^{n+1}\|_{\bar{0},B}+\frac{1}{\sqrt{\beta_{2}}}\exp\left(\frac{8T}{\beta_{0}}+\frac{C_{1}C_{3}T}{4\rho}e^{16T\beta_{0}^{-1}}\right)
     \left[4C_{ts}^{2}\gamma_{0}^{2}\rho\|u^{1}\|_{[H^{5}]^{3}}^{2}+\right.\frac{1}{576\rho}\left(26\|(f(u))_{2t}\|_{\bar{0},B}^{2}\right.
     \end{equation*}
     \begin{equation}\label{35}
      \left.\left.+128\rho^{2}\||u_{4t}|\|_{\bar{0},\infty}^{2}+52\left((\lambda_{1}+\lambda_{2})^{2}\||\nabla\cdot\overline{\nabla}u_{2t}|\|_{\bar{0},\infty}^{2}+
     \lambda_{2}^{2}\||\nabla(\nabla\cdot u_{2t})|\|_{\bar{0},\infty}^{2}\right)\right)\right]^{\frac{1}{2}}(\sigma^{2}+h^{3}),
     \end{equation}
     \begin{equation*}
     \|u_{h}^{n+1}-u^{n+1}\|_{\bar{0},B}\leq \frac{1}{\sqrt{\beta_{2}}}\exp\left(\frac{8T}{\beta_{0}}+\frac{C_{1}C_{3}T}{4\rho}e^{16T\beta_{0}^{-1}}\right)
     \left[4C_{ts}^{2}\gamma_{0}^{2}\rho\|u^{1}\|_{[H^{5}]^{3}}^{2}+\right.\frac{1}{576\rho}\left(26\|(f(u))_{2t}\|_{\bar{0},B}^{2}\right.
     \end{equation*}
     \begin{equation}\label{36}
      \left.\left.+128\rho^{2}\||u_{4t}|\|_{\bar{0},\infty}^{2}+52\left((\lambda_{1}+\lambda_{2})^{2}\||\nabla\cdot\overline{\nabla}u_{2t}|\|_{\bar{0},\infty}^{2}+
     \lambda_{2}^{2}\||\nabla(\nabla\cdot u_{2t})|\|_{\bar{0},\infty}^{2}\right)\right)\right]^{\frac{1}{2}}(\sigma^{2}+h^{3}),
     \end{equation}
      for $n=0,1,...,N-1$, where $C_{1}=\frac{C_{p}^{-2}}{18(\lambda_{1}+\lambda_{2})}$,$C_{2}=\frac{C_{\Omega}}{\lambda_{1}+\lambda_{2}}
      \frac{\max\{1,C_{\Omega}\}}{\min\{1,C_{\Omega}\}}$, $\beta_{2}=\min\{1,\text{\,}\frac{1}{2C_{2}}\}$, $C_{3}>0$ denotes the Lipschitz constant associated with the function $F(u)$, $\beta_{0}=1-\frac{C_{ts}^{2}}{2\rho C_{1}}>0$, $\gamma_{0}$ is the constant given in relation $(\ref{33a})$ while $C_{ts}$ is the constant defined in estimates $(\ref{32})$ and $C_{p}$ is the positive constant defined in estimate $(\ref{31})$.
    \end{theorem}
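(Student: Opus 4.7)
The plan is to derive an energy-type identity for the discrete error $e_h^{n} = u_h^{n} - u^{n}$ (decomposed, if convenient, as the sum of a finite-element projection error controlled directly by $(\ref{33a})$ and a purely discrete error) and to close it with a discrete Gronwall argument. The starting point is the exact identity $(\ref{26})$ satisfied by the analytical solution $u$ together with the scheme $(\ref{s1})$ for $u_h$: subtracting the former from the latter, for any $w\in\mathcal{U}_h$,
\begin{equation*}
\left(e_h^{n+1},w\right)_{\bar{0}}=\left(2e_h^{n}-e_h^{n-1},w\right)_{\bar{0}}-\frac{\sigma^{2}}{\rho}\left[B(e_h^{n},w)-\left(F(u_h^{n})-F(u^{n}),w\right)_{\bar{0}}\right]+\left(R^{n},w\right)_{*,\bar{0}},
\end{equation*}
where $R^n$ collects the local truncation residues coming from $(\ref{20})$, $(\ref{22})$, $(\ref{23})$, $(\ref{24})$ and is of order $\sigma^{3}$ times the norms appearing in $(\ref{35})$--$(\ref{36})$.

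The next step is to test with $w=e_h^{n+1}+e_h^{n-1}$, which is the canonical choice for a centred-in-time second-order scheme because it produces a telescoping identity for the energy
\begin{equation*}
\mathcal{E}^{n}=\|e_h^{n+1}\|_{\bar{0}}^{2}+\|e_h^{n}\|_{\bar{0}}^{2}-\frac{\sigma^{2}}{\rho}B(e_h^{n+1},e_h^{n}).
\end{equation*}
Using the symmetry of $B$ from Lemma $\ref{l1}$ and rearranging, one obtains $\mathcal{E}^{n}-\mathcal{E}^{n-1}$ equal to contributions from the nonlinear source and the residue $R^n$. To make $\mathcal{E}^{n}$ equivalent to $\|e_h^{n+1}\|_{\bar{0},B}^{2}+\|e_h^{n}\|_{\bar{0},B}^{2}$, I would invoke the inverse inequality $(\ref{31})$ together with the CFL condition $(\ref{32})$: $B(v,v)\leq 3(\lambda_1+\lambda_2)C_p^{2}h^{-2}\|v\|_{\bar{0}}^{2}$ by $(\ref{31})$, so $\sigma^{2}/\rho\cdot B(v,v)\leq (1-\beta_{0})\|v\|_{\bar{0}}^{2}$ with $\beta_{0}=1-C_{ts}^{2}/(2\rho C_{1})>0$; the cross term $\sigma^{2}B(e_h^{n+1},e_h^{n})/\rho$ is then absorbed by Cauchy--Schwarz and the coercivity side of $(\ref{37})$, yielding $\mathcal{E}^{n}\geq\beta_{2}(\|e_h^{n+1}\|_{\bar{0},B}^{2}+\|e_h^{n}\|_{\bar{0},B}^{2})$ with $\beta_{2}=\min\{1,1/(2C_{2})\}$ as in the statement.

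Bound each remaining forcing on the right: the nonlinear part by Lipschitz continuity, $\|F(u_h^n)-F(u^n)\|_{\bar{0}}\leq C_3\|e_h^n\|_{\bar{0}}$, producing a term $\sigma C_{1}C_{3}\rho^{-1}\mathcal{E}^{n}$; the residue by applying $(\ref{20a})$ and the regularity bound $(\ref{33})$ to obtain the $\sigma^{2}$ contribution visible in $(\ref{35})$--$(\ref{36})$; and the initial discrepancy $\|e_h^{1}\|_{\bar{0},B}$ through $(\ref{s5})$ and a Taylor expansion, giving the $4C_{ts}^{2}\gamma_{0}^{2}\rho\|u^{1}\|_{[H^{5}]^{3}}^{2}$ term. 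Summing from $1$ to $n$ and invoking a discrete Gronwall lemma then yields $(\ref{36})$, and $(\ref{35})$ follows from $(\ref{36})$ by the triangle inequality $\|u_h^{n+1}\|_{\bar{0},B}\leq\|u^{n+1}\|_{\bar{0},B}+\|e_h^{n+1}\|_{\bar{0},B}$. The delicate step — and the one I expect to be the technical heart of the argument — is the coercivity of $\mathcal{E}^{n}$: verifying that under $(\ref{32})$ the combination of inverse inequality and Cauchy--Schwarz does produce the constant $\beta_{2}$ with the precise factor $\beta_{0}$ claimed, and propagating this through the Gronwall step so that the exponential $\exp(8T/\beta_{0}+C_{1}C_{3}Te^{16T/\beta_{0}}/(4\rho))$ arises with the stated coefficients. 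The $h^{3}$ rate, rather than the formally higher $h^{4}$ or $h^{5}$ available from $(\ref{33a})$, will emerge from the unavoidable loss induced by using $(\ref{31})$ to bound the nonlinear interpolation-error contribution in the $B$-norm.
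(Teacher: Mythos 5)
There is a genuine gap at the heart of your argument: the test function $w=e_h^{n+1}+e_h^{n-1}$ does not produce the telescoping identity you assert. Writing $a=e_h^{n+1}$, $b=e_h^{n}$, $c=e_h^{n-1}$, the second-difference term gives $\left(a-2b+c,\,a+c\right)_{\bar{0}}=\|a+c\|_{\bar{0}}^{2}-2\left(b,\,a+c\right)_{\bar{0}}$, which is not of the form $\mathcal{E}^{n}-\mathcal{E}^{n-1}$ for your energy $\mathcal{E}^{n}=\|a\|_{\bar{0}}^{2}+\|b\|_{\bar{0}}^{2}-\frac{\sigma^{2}}{\rho}B(a,b)$, and $\frac{\sigma^{2}}{\rho}B(b,a+c)=\frac{\sigma^{2}}{\rho}\bigl[B(a,b)+B(b,c)\bigr]$ has the wrong sign pattern to cancel against $-\frac{\sigma^{2}}{\rho}\bigl[B(a,b)-B(b,c)\bigr]$. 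A sanity check makes the failure concrete: if $B\equiv0$ and the residues vanish, the error equation forces $a-b=b-c$, so $\|e_h^{n+1}-e_h^{n}\|_{\bar{0}}^{2}$ is conserved but $\|e_h^{n+1}\|_{\bar{0}}^{2}+\|e_h^{n}\|_{\bar{0}}^{2}$ grows; no Gronwall argument can close on your $\mathcal{E}^{n}$. For this explicit leapfrog-type scheme the correct choice is the discrete velocity $w=e_h^{n+1}-e_h^{n-1}$, which is what the paper uses: it yields $\left(a-2b+c,\,a-c\right)_{\bar{0}}=\|a-b\|_{\bar{0}}^{2}-\|b-c\|_{\bar{0}}^{2}$ and $B(b,a-c)=-\frac{1}{2}\bigl[\|a-b\|_{B}^{2}-\|b-c\|_{B}^{2}-\|a\|_{B}^{2}+\|c\|_{B}^{2}\bigr]$ (identities $(\ref{41})$--$(\ref{42})$), so the quantity that telescopes is $\|e_h^{n+1}-e_h^{n}\|_{\bar{0}}^{2}-\frac{\sigma^{2}}{2\rho}\|e_h^{n+1}-e_h^{n}\|_{B}^{2}+\frac{\sigma^{2}}{2\rho}\bigl(\|e_h^{n+1}\|_{B}^{2}+\|e_h^{n}\|_{B}^{2}\bigr)$. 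It is the first two of these terms --- not a cross term $B(e_h^{n+1},e_h^{n})$ --- that the inverse inequality $(\ref{31})$ (via Lemma $\ref{l2}$) and the CFL condition $(\ref{32})$ combine to bound below by $\beta_{0}\|e_h^{n+1}-e_h^{n}\|_{\bar{0}}^{2}$.

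Apart from this, your outline does follow the paper's route: subtraction of $(\ref{26})$ from $(\ref{s1})$, the Lipschitz bound on $F$, Cauchy--Schwarz on the $O(\sigma^{3})$ residues, control of $e_h^{1}$ via $(\ref{33a})$, summation plus two applications of the discrete Gronwall inequality, and the triangle inequality to pass from $(\ref{36})$ to $(\ref{35})$. One further correction: the $h^{3}$ rate does not arise from an inverse-inequality loss on the nonlinear term. Because the telescoping energy controls $\|e_h^{n+1}\|_{B}^{2}$ only with the small prefactor $\frac{\sigma^{2}}{2\rho}$, the paper must divide through by $\beta_{1}\sim\sigma^{2}$, so the initial-step contribution $\|e_h^{1}\|_{\bar{0}}^{2}\lesssim\gamma_{0}^{2}h^{10}\|u^{1}\|_{[H^{5}]^{3}}^{2}$ becomes $h^{10}/\sigma^{2}$, which the $\sigma$--$h$ coupling converts to $h^{6}$, i.e.\ $h^{3}$ after the square root. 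This is exactly why getting the energy right matters: with your (unjustified) $\|e_h^{n+1}\|_{\bar{0}}^{2}+\|e_h^{n}\|_{\bar{0}}^{2}$ energy there would be no division by $\sigma^{2}$ and the claimed rate structure would not emerge.
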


    The following Lemma plays a crucial role in the proof of Theorem $\ref{t1}$.

   \begin{lemma}\label{l2}\cite{1enarxiv}
     For every $v\in\mathcal{U}_{h}$, the following inequalities are satisfied
     \begin{equation}\label{39}
     \sqrt{C_{1}}h\|v\|_{B}\leq \|v\|_{\bar{0}}\leq \sqrt{C_{2}}\|v\|_{B},
     \end{equation}
     where $C_{1}$ and $C_{2}$ are given in Theorem $\ref{t1}$.
     \end{lemma}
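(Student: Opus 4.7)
The plan is to establish the two one-sided norm comparisons independently, since the left one is an inverse-type estimate that requires a finite-element-specific ingredient, while the right one is a genuine continuous inequality that holds on all of $\mathcal{U}$. Both reduce, after unpacking the definition $\|v\|_B^2 = (\lambda_1+\lambda_2)\|\overline{\nabla}v\|_*^2 + \lambda_2\|\nabla\cdot v\|_0^2$, to controlling a sum of squared first-order partials of the components $v_1, v_2, v_3$ of $v$.

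First I would prove the right inequality $\|v\|_{\bar 0}\le\sqrt{C_2}\,\|v\|_B$, which does not require $v$ to be discrete. For each component $v_l$, the Poincar\'e--Friedrichs inequality \pref{34} gives $\|v_l\|_0 \le C_\Omega\|\nabla v_l\|_{\bar 0}$, and summing over $l=1,2,3$ yields $\|v\|_{\bar 0}^2 \le C_\Omega^2\|\overline{\nabla}v\|_*^2$. On the other hand, discarding the nonnegative divergence term in $\|v\|_B^2$ gives $\|\overline{\nabla}v\|_*^2 \le (\lambda_1+\lambda_2)^{-1}\|v\|_B^2$. Combining these two estimates produces $\|v\|_{\bar 0}^2 \le \tfrac{C_\Omega^2}{\lambda_1+\lambda_2}\|v\|_B^2$, which is bounded above by $C_2\|v\|_B^2$ in both regimes $C_\Omega \ge 1$ (where the stated $C_2$ equals $C_\Omega^2/(\lambda_1+\lambda_2)$) and $C_\Omega < 1$ (where $C_\Omega^2 \le 1/\min\{1,C_\Omega\}$, so the stated $C_2$ is strictly larger).

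For the left inequality I would invoke the inverse estimate \pref{31}, which is precisely why the hypothesis $v\in\mathcal{U}_h$ is needed. Apply \pref{31} entry-by-entry: for each $l=1,2,3$, $\|\partial v/\partial x_l\|_{\bar 0}^2 \le C_p^2 h^{-2}\|v\|_{\bar 0}^2$, so summing gives $\|\overline{\nabla}v\|_*^2 = \sum_l \|\partial v/\partial x_l\|_{\bar 0}^2 \le 3C_p^2 h^{-2}\|v\|_{\bar 0}^2$. For the divergence term, the Cauchy--Schwarz inequality in $\mathbb{R}^3$ gives $(\nabla\cdot v)^2 \le 3\sum_i(\partial v_i/\partial x_i)^2$ pointwise, so $\|\nabla\cdot v\|_0^2 \le 3\|\overline{\nabla}v\|_*^2 \le 9C_p^2 h^{-2}\|v\|_{\bar 0}^2$. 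Plugging both bounds into the definition of $\|v\|_B^2$ yields $\|v\|_B^2 \le \bigl(3(\lambda_1+\lambda_2)+9\lambda_2\bigr)C_p^2 h^{-2}\|v\|_{\bar 0}^2 \le 18(\lambda_1+\lambda_2)C_p^2 h^{-2}\|v\|_{\bar 0}^2$, which rearranges exactly to $C_1 h^2\|v\|_B^2 \le \|v\|_{\bar 0}^2$ for the claimed $C_1 = \tfrac{1}{18}C_p^{-2}(\lambda_1+\lambda_2)^{-1}$.

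The only subtle step is matching the slightly wasteful constant $C_2$ as written (rather than the tighter $C_\Omega^2/(\lambda_1+\lambda_2)$); the author has evidently absorbed the two regimes $C_\Omega \gtrless 1$ into one formula, and I would simply verify both cases explicitly to confirm the stated expression is a valid upper bound in either regime. No other step presents real difficulty: the left estimate is a textbook application of the inverse inequality, and the right estimate is a componentwise Poincar\'e together with dropping a nonnegative term. Since the bilinear form's coercivity was already addressed in Lemma \ref{l1} (see \pref{37}), the present lemma is essentially a refinement tailored to the discrete setting, and the proof organization follows directly from the definitions \pref{11} and \pref{38}.
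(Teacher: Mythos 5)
Your proof is correct: both one-sided estimates go through with the stated constants, and I verified that $3(\lambda_1+\lambda_2)+9\lambda_2\le 18(\lambda_1+\lambda_2)$ and that $C_\Omega\le\max\{1,C_\Omega\}/\min\{1,C_\Omega\}$ in both regimes, so your bounds are indeed absorbed into the paper's $C_1$ and $C_2$. Note, however, that the paper itself supplies no proof of this lemma at all: it is stated with a bare citation to \cite{1enarxiv}, so there is no in-paper argument to compare against. What your write-up adds is a self-contained, elementary derivation from exactly the two ingredients the paper has already postulated, namely the inverse inequality \pref{31} (used entry-by-entry plus a pointwise Cauchy--Schwarz in $\mathbb{R}^3$ for the divergence term) and the Poincar\'e--Friedrichs inequality \pref{34} (used componentwise, after dropping the nonnegative divergence contribution from $\|v\|_B^2$). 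This buys something the citation does not: it confirms that the specific numerical constants $C_1=\tfrac{1}{18}C_p^{-2}(\lambda_1+\lambda_2)^{-1}$ and $C_2=\tfrac{C_\Omega}{\lambda_1+\lambda_2}\tfrac{\max\{1,C_\Omega\}}{\min\{1,C_\Omega\}}$ appearing in Theorem \ref{t1} and in the CFL threshold \pref{32} are actually consistent with the assumptions \pref{31} and \pref{34} as stated, rather than being imported from a reference whose setting may differ. The only cosmetic remark is the one you already made yourself: the sharp constant your argument produces on the right-hand side is $C_\Omega^2/(\lambda_1+\lambda_2)$, which is strictly smaller than the paper's $C_2$ when $C_\Omega<1$, so the lemma as stated is slightly lossy but valid.
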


    \begin{proof}(of Theorem $\ref{t1}$).
     Set $e_{h}^{l}=u_{h}^{l}-u^{l}$, combine equations $(\ref{26})$ and $(\ref{s1})$, and rearranging terms to get
     \begin{equation*}
      \left(e_{h}^{n+1}-2e_{h}^{n}+e_{h}^{n-1},w\right)_{\bar{0}}+\frac{\sigma^{2}}{\rho}B(e_{h}^{n},w)=\frac{\sigma^{2}}{\rho}\left(F(u_{h}^{n})-F(u^{n}),w\right)_{\bar{0}}-
      \frac{\sigma^{4}}{24\rho}B(u_{2t}(\theta_{1})-5u_{2t}(\theta_{2}),w)-
      \end{equation*}
       \begin{equation*}
      \frac{\sigma^{4}}{24\rho}\left(5(F(u))_{2t}(\theta_{2})-(F(u))_{2t}(\theta_{1}),w\right)_{\bar{0}}+\frac{\sigma^{3}}{3}\left((\epsilon(t_{n+1})-
      \epsilon(t_{n-1}))u_{4t}(\overline{\epsilon}),w\right)_{\bar{0}},\text{\,\,\,\,}\forall w\in \mathcal{U}.
      \end{equation*}

      Taking $w=e_{h}^{n+1}-e_{h}^{n-1}$, this becomes
      \begin{equation*}
      \left(e_{h}^{n+1}-2e_{h}^{n}+e_{h}^{n-1},e_{h}^{n+1}-e_{h}^{n-1}\right)_{\bar{0}}+\frac{\sigma^{2}}{\rho}B(e_{h}^{n},e_{h}^{n+1}-e_{h}^{n-1})=\frac{\sigma^{2}}{\rho}
      \left(F(u_{h}^{n})-F(u^{n}),e_{h}^{n+1}-e_{h}^{n-1}\right)_{\bar{0}}-
      \end{equation*}
      \begin{equation*}
      \frac{\sigma^{4}}{24\rho}B(u_{2t}(\theta_{1})-5u_{2t}(\theta_{2}),e_{h}^{n+1}-e_{h}^{n-1})-\frac{\sigma^{4}}{24\rho}\left(5(F(u))_{2t}(\theta_{2})-(F(u))_{2t}(\theta_{1}),
      e_{h}^{n+1}-e_{h}^{n-1}\right)_{\bar{0}}+
      \end{equation*}
       \begin{equation}\label{40}
      \frac{\sigma^{3}}{3}\left((\epsilon(t_{n+1})-\epsilon(t_{n-1}))u_{4t}(\overline{\epsilon}),e_{h}^{n+1}-e_{h}^{n-1}\right)_{\bar{0}}.
      \end{equation}

      Simple calculations give
      \begin{equation*}
      \left(e_{h}^{n+1}-2e_{h}^{n}+e_{h}^{n-1},e_{h}^{n+1}-e_{h}^{n-1}\right)_{\bar{0}}=\left(e_{h}^{n+1}-e_{h}^{n},e_{h}^{n+1}-e_{h}^{n-1}\right)_{\bar{0}}-
      \left(e_{h}^{n}-e_{h}^{n-1},e_{h}^{n+1}-e_{h}^{n-1}\right)_{\bar{0}}=
     \end{equation*}
     \begin{equation*}
      \left(e_{h}^{n+1}-e_{h}^{n},e_{h}^{n+1}-e_{h}^{n}\right)_{\bar{0}}+\left(e_{h}^{n+1}-e_{h}^{n},e_{h}^{n}-e_{h}^{n-1}\right)_{\bar{0}}-
      \left(e_{h}^{n}-e_{h}^{n-1},e_{h}^{n+1}-e_{h}^{n}\right)_{\bar{0}}-
     \end{equation*}
     \begin{equation}\label{41}
      \left(e_{h}^{n}-e_{h}^{n-1},e_{h}^{n}-e_{h}^{n-1}\right)_{\bar{0}}=\|e_{h}^{n+1}-e_{h}^{n}\|_{\bar{0}}^{2}-\|e_{h}^{n}-e_{h}^{n-1}\|_{\bar{0}}^{2}.
     \end{equation}
     
     In a similar manner, it holds
     \begin{equation*}
      B(e_{h}^{n},e_{h}^{n+1}-e_{h}^{n-1})=-\frac{1}{2}B(-2e_{h}^{n},e_{h}^{n+1}-e_{h}^{n-1})=-\frac{1}{2}[B(e_{h}^{n+1}-2e_{h}^{n}+e_{h}^{n-1},e_{h}^{n+1}-e_{h}^{n-1})-
     \end{equation*}
     \begin{equation}\label{42}
      B(e_{h}^{n+1}+e_{h}^{n-1},e_{h}^{n+1}-e_{h}^{n-1})]=-\frac{1}{2}[\|e_{h}^{n+1}-e_{h}^{n}\|_{B}^{2}-\|e_{h}^{n}-e_{h}^{n-1}\|_{B}^{2}-\|e_{h}^{n+1}\|_{B}^{2}+
      \|e_{h}^{n-1}\|_{B}^{2}].
     \end{equation}

      Substituting estimates $(\ref{41})$ and $(\ref{42})$ into approximation $(\ref{40})$ and rearranging terms, this yields
      \begin{equation*}
      (\|e_{h}^{n+1}-e_{h}^{n}\|_{\bar{0}}^{2}-\|e_{h}^{n}-e_{h}^{n-1}\|_{\bar{0}}^{2})+\frac{\sigma^{2}}{2\rho}\left[-(\|e_{h}^{n+1}-e_{h}^{n}\|_{B}^{2}-
      \|e_{h}^{n}-e_{h}^{n-1}\|_{B}^{2})+(\|e_{h}^{n+1}\|_{B}^{2}-\|e_{h}^{n}\|_{B}^{2})+\right.
      \end{equation*}
      \begin{equation*}
      \left.(\|e_{h}^{n}\|_{B}^{2}-\|e_{h}^{n-1}\|_{B}^{2})\right]=\frac{\sigma^{2}}{\rho}\left(F(u_{h}^{n})-F(u^{n}),e_{h}^{n+1}-e_{h}^{n-1}\right)_{\bar{0}}-
      \frac{\sigma^{4}}{24\rho}\left[B(u_{2t}(\theta_{1})-5u_{2t}(\theta_{2}),e_{h}^{n+1}-e_{h}^{n-1})+\right.
      \end{equation*}
       \begin{equation}\label{43}
       \left.\left(5(F(u))_{2t}(\theta_{2})-(F(u))_{2t}(\theta_{1}),e_{h}^{n+1}-e_{h}^{n-1}\right)_{\bar{0}}\right]+
      \frac{\sigma^{3}}{3}\left((\epsilon(t_{n+1})-\epsilon(t_{n-1}))u_{4t}(\overline{\epsilon}),e_{h}^{n+1}-e_{h}^{n-1}\right)_{\bar{0}}.
      \end{equation}

      The application of the Cauchy-Schwarz inequality provides
     \begin{equation}\label{44}
     \frac{\sigma^{2}}{\rho}\left(F(u_{h}^{n})-F(u^{n}),e_{h}^{n+1}-e_{h}^{n-1}\right)_{\bar{0}}\leq \frac{\sigma^{3}}{4\rho^{2}}\|F(u_{h}^{n})-F(u^{n})\|_{\bar{0}}^{2}+
     \sigma\|e_{h}^{n+1}-e_{h}^{n-1}\|_{\bar{0}}^{2}.
     \end{equation}

     Since the function $F(v)$ satisfies a Lipschitz condition in variable $v$, there is a positive constant $C_{3}$ independent of the time step $\sigma$ and mesh size $h$, so that
      $\|F(u_{h}^{n})-F(u^{n})\|_{\bar{0}}\leq \sqrt{C_{3}}\|u_{h}^{n}-u^{n}\|_{\bar{0}}=\sqrt{C_{3}}\|e_{h}^{n}\|_{\bar{0}}$. Utilizing estimate $(\ref{39})$, this implies $\|F(u_{h}^{n})-F(u^{n})\|_{\bar{0}}^{2}\leq C_{3}C_{2}\|e_{h}^{n}\|_{B}^{2}$. Substituting this into estimate $(\ref{44})$ to obtain
      \begin{equation}\label{45}
     \frac{\sigma^{2}}{\rho}\left(F(u_{h}^{n})-F(u^{n}),e_{h}^{n+1}-e_{h}^{n-1}\right)_{\bar{0}}\leq \frac{C_{3}C_{2}\sigma^{3}}{4\rho^{2}}\|e_{h}^{n}\|_{B}^{2}+
     \sigma\|e_{h}^{n+1}-e_{h}^{n-1}\|_{\bar{0}}^{2}.
     \end{equation}
     
     Additionally,
     \begin{equation*}
      -\frac{\sigma^{4}}{24\rho}\left(5(F(u))_{2t}(\theta_{2})-(F(u))_{2t}(\theta_{1}),e_{h}^{n+1}-e_{h}^{n-1}\right)_{\bar{0}}\leq \frac{\sigma^{7}}{(48\rho)^{2}}\|5(F(u))_{2t}(\theta_{2})-(F(u))_{2t}(\theta_{1})\|_{\bar{0}}^{2}+
      \end{equation*}
     \begin{equation}\label{46}
     \sigma\|e_{h}^{n+1}-e_{h}^{n-1}\|_{\bar{0}}^{2}\leq \frac{2\sigma^{7}}{(48\rho)^{2}}(25\|(F(u))_{2t}(\theta_{2})\|_{\bar{0}}^{2}+
     \|(F(u))_{2t}(\theta_{1})\|_{\bar{0}}^{2})+\sigma\|e_{h}^{n+1}-e_{h}^{n-1}\|_{\bar{0}}^{2}.
     \end{equation}
     
     But it is shown below equation $(\ref{19})$ that $-2\sigma<\epsilon(t_{n+1})-\epsilon(t_{n-1})<2\sigma$. Utilizing this, direct calculations result in
     \begin{equation}\label{47}
     \frac{\sigma^{3}}{3}\left((\epsilon(t_{n+1})-\epsilon(t_{n-1}))u_{4t}(\overline{\epsilon}),e_{h}^{n+1}-e_{h}^{n-1}\right)_{\bar{0}}\leq  \frac{\sigma^{7}}{9}\|u_{4t}(\overline{\epsilon})\|_{\bar{0}}^{2}+\sigma\|e_{h}^{n+1}-e_{h}^{n-1}\|_{\bar{0}}^{2}.
     \end{equation}

     Since $u\in [W^{5}_{2}(0,T;\text{\,}H^{5})]^{3}$, so $\nabla\cdot\overline{\nabla}u_{2t},\nabla(\nabla\cdot u_{2t})\in [W^{3}_{2}(0,T;\text{\,}H^{3})]^{3}$. Using the definition of the bilinear operator $B(\cdot,\cdot)$ given by equation $(\ref{11})$ together with equations $(\ref{15})$ and $(\ref{16})$, it is easy to observe that
     \begin{equation*}
     B(u_{2t}(\theta_{1})-5u_{2t}(\theta_{2}),e_{h}^{n+1}-e_{h}^{n-1})=(\lambda_{1}+\lambda_{2})\left(\overline{\nabla}(u_{2t}(\theta_{1})-5u_{2t}(\theta_{2})),
     \overline{\nabla}(e_{h}^{n+1}-e_{h}^{n-1})\right)_{*}+
     \end{equation*}
     \begin{equation*}
     \lambda_{2}\left(\nabla\cdot(u_{2t}(\theta_{1})-5u_{2t}(\theta_{2})),\nabla\cdot(e_{h}^{n+1}-e_{h}^{n-1})\right)_{0}=
     -(\lambda_{1}+\lambda_{2})\left(\nabla\cdot\overline{\nabla}(u_{2t}(\theta_{1})-5u_{2t}(\theta_{2})),e_{h}^{n+1}-e_{h}^{n-1}\right)_{\bar{0}}-
     \end{equation*}
     \begin{equation*}
     \lambda_{2}\left(\nabla(\nabla\cdot(u_{2t}(\theta_{1})-5u_{2t}(\theta_{2}))),e_{h}^{n+1}-e_{h}^{n-1}\right)_{\bar{0}}.
     \end{equation*}

     Thus, straightforward computations yield
     \begin{equation*}
     B(u_{2t}(\theta_{1})-5u_{2t}(\theta_{2}),e_{h}^{n+1}-e_{h}^{n-1})\leq \frac{4\sigma^{7}}{(48\rho)^{2}}\left[(\lambda_{1}+\lambda_{2})^{2}
     \left(\|\nabla\cdot\overline{\nabla}u_{2t}(\theta_{1})\|_{\bar{0}}^{2}+25\|\nabla\cdot\overline{\nabla}u_{2t}(\theta_{2})\|_{\bar{0}}^{2}\right)+\right.
     \end{equation*}
     \begin{equation}\label{48}
     \left.\lambda_{2}^{2}\left(\|\nabla(\nabla\cdot u_{2t}(\theta_{1}))\|_{\bar{0}}^{2}+25\|\nabla(\nabla\cdot u_{2t}(\theta_{2}))\|_{\bar{0}}^{2}\right)\right]+
     \sigma\|e_{h}^{n+1}-e_{h}^{n-1}\|_{\bar{0}}^{2}.
     \end{equation}

     Substituting estimates $(\ref{45})$-$(\ref{48})$ into equation $(\ref{43})$, we obtain
      \begin{equation*}
      (\|e_{h}^{n+1}-e_{h}^{n}\|_{\bar{0}}^{2}-\|e_{h}^{n}-e_{h}^{n-1}\|_{\bar{0}}^{2})+\frac{\sigma^{2}}{2\rho}\left[-(\|e_{h}^{n+1}-e_{h}^{n}\|_{B}^{2}-
      \|e_{h}^{n}-e_{h}^{n-1}\|_{B}^{2})+(\|e_{h}^{n+1}\|_{B}^{2}-\|e_{h}^{n}\|_{B}^{2})+\right.
      \end{equation*}
      \begin{equation*}
      \left.(\|e_{h}^{n}\|_{B}^{2}-\|e_{h}^{n-1}\|_{B}^{2})\right]\leq \frac{C_{3}C_{2}\sigma^{3}}{4\rho^{2}}\|e_{h}^{n}\|_{B}^{2}+\frac{2\sigma^{7}}{(48\rho)^{2}}\left\{
      25\|(F(u))_{2t}(\theta_{2})\|_{\bar{0}}^{2}+\|(F(u))_{2t}(\theta_{1})\|_{\bar{0}}^{2}+128\rho^{2}\|u_{4t}(\overline{\epsilon})\|_{\bar{0}}^{2}\right.
      \end{equation*}
      \begin{equation*}
      \left.+2\left[(\lambda_{1}+\lambda_{2})^{2}\left(\|\nabla\cdot\overline{\nabla}u_{2t}(\theta_{1})\|_{\bar{0}}^{2}+25\|\nabla\cdot
       \overline{\nabla}u_{2t}(\theta_{2})\|_{\bar{0}}^{2}\right)+\lambda_{2}^{2}\left(\|\nabla(\nabla\cdot u_{2t}(\theta_{1}))\|_{\bar{0}}^{2}+25\|\nabla(\nabla\cdot u_{2t}(\theta_{2}))\|_{\bar{0}}^{2}\right)\right]\right\}
      \end{equation*}
       \begin{equation}\label{49}
       +4\sigma\|e_{h}^{n+1}-e_{h}^{n-1}\|_{\bar{0}}^{2}.
      \end{equation}

       But $u\in [W^{5}_{2}(0,T;\text{\,}H^{5})]^{3}$ implies $u_{4t}\in [W^{1}_{2}(0,T;\text{\,}H^{5})]^{3}$. Further, the first equation in system $(\ref{1})$ shows that $F(u)\in [W^{3}_{2}(0,T;\text{\,}H^{3})]^{3}$. This suggests that $(F(u))_{2t}\in [W^{1}_{2}(0,T;\text{\,}H^{3})]^{3}$. Additionally, $\theta_{1}\in(t_{n-1},\text{\,}t_{n})$, $\theta_{2}\in(t_{n},\text{\,}t_{n+1})$ and $\overline{\epsilon}\in(\min\{\epsilon(t_{n-1}),\epsilon(t_{n+1})\},\text{\,}
      \max\{\epsilon(t_{n-1}),\epsilon(t_{n+1})\})\subset(t_{n-1},\text{\,}t_{n+1})$, for every $n=1,2,...,N-1$. Utilizing this, estimate $(\ref{49})$ implies
       \begin{equation*}
      (\|e_{h}^{n+1}-e_{h}^{n}\|_{\bar{0}}^{2}-\|e_{h}^{n}-e_{h}^{n-1}\|_{\bar{0}}^{2})+\frac{\sigma^{2}}{2\rho}\left[-(\|e_{h}^{n+1}-e_{h}^{n}\|_{B}^{2}-
      \|e_{h}^{n}-e_{h}^{n-1}\|_{B}^{2})+(\|e_{h}^{n+1}\|_{B}^{2}-\|e_{h}^{n}\|_{B}^{2})+\right.
      \end{equation*}
      \begin{equation*}
      \left.(\|e_{h}^{n}\|_{B}^{2}-\|e_{h}^{n-1}\|_{B}^{2})\right]\leq \frac{C_{3}C_{2}\sigma^{3}}{4\rho^{2}}\|e_{h}^{n}\|_{B}^{2}+\frac{2\sigma^{7}}{(48\rho)^{2}}\left\{
      26\||(F(u))_{2t}|\|_{\bar{0},\infty}^{2}+128\rho^{2}\||u_{4t}|\|_{\bar{0},\infty}^{2}+\right.
      \end{equation*}
      \begin{equation*}
      \left.52\left[(\lambda_{1}+\lambda_{2})^{2}\||\nabla\cdot\overline{\nabla}u_{2t}|\|_{\bar{0},\infty}^{2}+\lambda_{2}^{2}\||\nabla(\nabla\cdot u_{2t})|\|_{\bar{0},\infty}^{2}\right]\right\}+4\sigma\|e_{h}^{n+1}-e_{h}^{n-1}\|_{\bar{0}}^{2}.
      \end{equation*}

     Summing up this estimate for $l=1,2,...,n$, observing that
     \begin{equation*}
      \underset{l=1}{\overset{n}\sum}\|e_{h}^{l+1}-e_{h}^{l-1}\|_{\bar{0}}^{2}\leq 2\underset{l=1}{\overset{n}\sum}(\|e_{h}^{l+1}-e_{h}^{l}\|_{\bar{0}}^{2}+
      \|e_{h}^{l}-e_{h}^{l-1}\|_{\bar{0}}^{2})\leq 4\underset{l=0}{\overset{n}\sum}\|e_{h}^{l+1}-e_{h}^{l}\|_{\bar{0}}^{2},
     \end{equation*}
     and rearranging term, this provides
     \begin{equation*}
      \|e_{h}^{n+1}-e_{h}^{n}\|_{\bar{0}}^{2}-\frac{\sigma^{2}}{2\rho}\|e_{h}^{n+1}-e_{h}^{n}\|_{B}^{2}+\frac{\sigma^{2}}{2\rho}\left(\||e_{h}^{n+1}|\|_{B}^{2}+\||e_{h}^{n}|\|_{B}^{2}+
      \||e_{h}^{1}-e_{h}^{0}|\|_{B}^{2}\right)\leq \|e_{h}^{1}-e_{h}^{0}\|_{\bar{0}}^{2}+
     \end{equation*}
     \begin{equation*}
      \frac{\sigma^{2}}{2\rho}\left(\||e_{h}^{1}|\|_{B}^{2}+\||e_{h}^{0}|\|_{B}^{2}\right)+\frac{C_{3}C_{2}\sigma^{3}}{4\rho^{2}}\underset{l=1}{\overset{n}\sum}\|e_{h}^{l}\|_{B}^{2}+
      16\sigma\underset{l=0}{\overset{n}\sum}\|e_{h}^{l+1}-e_{h}^{l}\|_{\bar{0}}^{2}+\frac{2n\sigma^{7}}{(48\rho)^{2}}\left\{26\||(F(u))_{2t}|\|_{\bar{0},\infty}^{2}+\right.
      \end{equation*}
       \begin{equation}\label{50}
      \left.128\rho^{2}\||u_{4t}|\|_{\bar{0},\infty}^{2}+52\left[(\lambda_{1}+\lambda_{2})^{2}\||\nabla\cdot\overline{\nabla}u_{2t}|\|_{\bar{0},\infty}^{2}+
      \lambda_{2}^{2}\||\nabla(\nabla\cdot u_{2t})|\|_{\bar{0},\infty}^{2}\right]\right\}.
      \end{equation}

     Using estimate $(\ref{39})$ along with the time step requirement $(\ref{32})$, it is easy to see that
      \begin{equation*}
      \|e_{h}^{n+1}-e_{h}^{n}\|_{\bar{0}}^{2}-\frac{\sigma^{2}}{2\rho}\|e_{h}^{n+1}-e_{h}^{n}\|_{B}^{2}\geq \|e_{h}^{n+1}-e_{h}^{n}\|_{\bar{0}}^{2}-\frac{\sigma^{2}}{2\rho C_{1}h^{2}}\|e_{h}^{n+1}-e_{h}^{n}\|_{\bar{0}}^{2}\geq \beta_{0}\|e_{h}^{n+1}-e_{h}^{n}\|_{\bar{0}}^{2},
      \end{equation*}
      and $\frac{1}{C_{2}}\|e_{h}^{n}\|_{\bar{0}}^{2}\leq \|e_{h}^{n}\|_{B}^{2}$, where $\beta_{0}=1-\frac{C_{ts}^{2}}{2\rho C_{1}}>0$. Utilizing these facts, inequality $(\ref{50})$ becomes
     \begin{equation*}
      \|e_{h}^{n+1}-e_{h}^{n}\|_{\bar{0}}^{2}+\frac{\sigma^{2}}{2\rho\beta_{0}}\left(\|e_{h}^{n+1}\|_{B}^{2}+\frac{1}{C_{2}}\|e_{h}^{n}\|_{\bar{0}}^{2}+
      \|e_{h}^{1}-e_{h}^{0}\|_{B}^{2}\right)\leq \frac{1}{\beta_{0}}\left[\|e_{h}^{1}-e_{h}^{0}\|_{\bar{0}}^{2}+\frac{\sigma^{2}}{2\rho}\left(\|e_{h}^{1}\|_{B}^{2}+
      \|e_{h}^{0}\|_{B}^{2}\right)\right]+
     \end{equation*}
     \begin{equation*}
      \frac{C_{3}C_{2}\sigma^{3}}{4\beta_{0}\rho^{2}}\underset{l=1}{\overset{n}\sum}\|e_{h}^{l}\|_{B}^{2}+\frac{16\sigma}{\beta_{0}}\underset{l=0}{\overset{n}\sum}
      \|e_{h}^{l+1}-e_{h}^{l}\|_{\bar{0}}^{2}+\frac{2n\sigma^{7}}{(48\rho)^{2}\beta_{0}}\left\{26\||(F(u))_{2t}|\|_{\bar{0},\infty}^{2}+128\rho^{2}\||u_{4t}|\|_{\bar{0},\infty}^{2}+\right.
      \end{equation*}
      \begin{equation}\label{51}
      \left.52\left[(\lambda_{1}+\lambda_{2})^{2}\||\nabla\cdot\overline{\nabla}u_{2t}|\|_{\bar{0},\infty}^{2}+
       \lambda_{2}^{2}\||\nabla(\nabla\cdot u_{2t})|\|_{\bar{0},\infty}^{2}\right]\right\}.
      \end{equation}

      For small values of the time step $\sigma$ that satisfies the restriction $(\ref{32})$, using the initial condition $(\ref{s3})$, i.e., $e_{h}^{0}=0$, and applying the discrete Gronwall inequality, estimate $(\ref{51})$ implies
      \begin{equation*}
      \|e_{h}^{n+1}-e_{h}^{n}\|_{\bar{0}}^{2}+\frac{\sigma^{2}}{2\rho\beta_{0}}\left(\|e_{h}^{n+1}\|_{B}^{2}+\frac{1}{C_{2}}\|e_{h}^{n}\|_{\bar{0}}^{2}\right) \leq \left\{\frac{1}{\beta_{0}}\|e_{h}^{1}\|_{\bar{0}}^{2}+\frac{C_{3}C_{2}\sigma^{3}}{4\beta_{0}\rho^{2}}\underset{l=1}{\overset{n}\sum}\|e_{h}^{l}\|_{B}^{2}+
      \frac{2n\sigma^{7}}{(48\rho)^{2}\beta_{0}}\left\{26\||(F(u))_{2t}|\|_{\bar{0},\infty}^{2}\right.\right.
     \end{equation*}
     \begin{equation*}
      \left.\left.+128\rho^{2}\||u_{4t}|\|_{\bar{0},\infty}^{2}+52\left[(\lambda_{1}+\lambda_{2})^{2}\||\nabla\cdot\overline{\nabla}u_{2t}|\|_{\bar{0},\infty}^{2}+
      \lambda_{2}^{2}\||\nabla(\nabla\cdot u_{2t})|\|_{\bar{0},\infty}^{2}\right]\right\}\right\}e^{\frac{16n\sigma}{\beta_{0}}}.
      \end{equation*}

     Since $n\sigma\leq T$, $\frac{\sigma^{2}}{4C_{2}\rho\beta_{0}}<1$, and $\|e_{h}^{n+1}\|_{\bar{0}}^{2}\leq(\|e_{h}^{n+1}-e_{h}^{n}\|_{\bar{0}}+\|e_{h}^{n}\|_{\bar{0}})^{2}
     \leq 2(\|e_{h}^{n+1}-e_{h}^{n}\|_{\bar{0}}^{2}+\|e_{h}^{n}\|_{\bar{0}}^{2})$. Setting $\beta_{1}=\frac{\sigma^{2}}{4C_{2}\rho\beta_{0}}$, it is easy to see that this estimate becomes
      \begin{equation*}
      \beta_{1}\|e_{h}^{n+1}\|_{\bar{0}}^{2}+2C_{2}\beta_{1}\|e_{h}^{n+1}\|_{B}^{2} \leq \left\{\frac{1}{\beta_{0}}\|e_{h}^{1}\|_{\bar{0}}^{2}+
      \frac{C_{1}C_{2}C_{3}\beta_{1}\sigma}{\rho}\underset{l=1}{\overset{n}\sum}\|e_{h}^{l}\|_{B}^{2}+\frac{C_{1}\beta_{1}\sigma^{4}}{288\rho}
      \left\{26\||(F(u))_{2t}|\|_{\bar{0},\infty}^{2}\right.\right.
     \end{equation*}
     \begin{equation*}
      \left.\left.+128\rho^{2}\||u_{4t}|\|_{\bar{0},\infty}^{2}+52\left[(\lambda_{1}+\lambda_{2})^{2}\||\nabla\cdot\overline{\nabla}u_{2t}|\|_{\bar{0},\infty}^{2}+
      \lambda_{2}^{2}\||\nabla(\nabla\cdot u_{2t})|\|_{\bar{0},\infty}^{2}\right]\right\}\right\}e^{\frac{16T}{\beta_{0}}}.
      \end{equation*}

      Multiplying both sides of this estimate by $\frac{1}{2C_{2}\beta_{1}}$ and applying the Gronwall inequality, this gives
       \begin{equation*}
      \|e_{h}^{n+1}\|_{\bar{0}}^{2}+\|e_{h}^{n+1}\|_{B}^{2}\leq \beta_{2}^{-1}e^{\frac{16T}{\beta_{0}}}\left\{\frac{1}{2C_{2}\beta_{1}\beta_{0}}\|e_{h}^{1}\|_{\bar{0}}^{2}+
      \frac{\sigma^{4}}{576\rho}\left\{26\||(F(u))_{2t}|\|_{\bar{0},\infty}^{2}+128\rho^{2}\||u_{4t}|\|_{\bar{0},\infty}^{2}+\right.\right.
     \end{equation*}
     \begin{equation}\label{52}
      \left.\left.52\left[(\lambda_{1}+\lambda_{2})^{2}\||\nabla\cdot\overline{\nabla}u_{2t}|\|_{\bar{0},\infty}^{2}+\lambda_{2}^{2}
      \||\nabla(\nabla\cdot u_{2t})|\|_{\bar{0},\infty}^{2}\right]\right\}\right\}\exp\left(\frac{C_{1}C_{3}T}{2\rho}e^{\frac{16T}{\beta_{0}}}\right),
      \end{equation}
      where $\beta_{2}=\min\{1,\frac{1}{2C_{2}}\}$.\\

       Let $v_{h}$ be an arbitrary element in $\mathcal{U}_{h}$. So, $\|e_{h}^{1}\|_{\bar{0}}^{2}=\|(u_{h}^{1}-v_{h})+ (v_{h}-u^{1})\|_{\bar{0}}^{2} \leq 2(\|u_{h}^{1}-v_{h}\|_{\bar{0}}^{2}+\|v_{h}-u^{1}\|_{\bar{0}}^{2})$. Since $\underset{v_{h}\in\mathcal{U}_{h}}{\inf}\|u_{h}^{1}-v_{h}\|_{\bar{0}}=0$, taking the infimum over $v_{h}\in\mathcal{U}_{h}$, in both sides of estimate $(\ref{52})$ and utilizing the first inequality in relation $(\ref{33a})$, this yields
       \begin{equation*}
      \|e_{h}^{n+1}\|_{\bar{0}}^{2}+\|e_{h}^{n+1}\|_{B}^{2}\leq \beta_{2}^{-1}e^{\frac{16T}{\beta_{0}}}\left\{\frac{\gamma_{0}^{2}h^{8}}{C_{2}\beta_{1} \beta_{0}} \|u^{1}\|_{[H^{5}]^{3}}^{2}+\frac{\sigma^{4}}{576\rho}\left\{26\||(F(u))_{2t}|\|_{\bar{0},\infty}^{2}+128\rho^{2}\||u_{4t}|\|_{\bar{0},\infty}^{2}+\right.\right.
     \end{equation*}
     \begin{equation}\label{53}
      \left.\left.52\left[(\lambda_{1}+\lambda_{2})^{2}\||\nabla\cdot\overline{\nabla}u_{2t}|\|_{\bar{0},\infty}^{2}+\lambda_{2}^{2} \||\nabla(\nabla\cdot u_{2t})|\|_{\bar{0},\infty}^{2} \right]\right\}\right\}\exp\left(\frac{C_{1}C_{3}T}{2\rho}e^{\frac{16T}{\beta_{0}}}\right).
      \end{equation}

      But, $\beta_{1}=\frac{\sigma^{2}}{4C_{2}\beta_{0}\rho}$, using the time step limitation $(\ref{32})$, it holds: $\frac{\gamma_{0}^{2}h^{8}}{C_{2}\beta_{1} \beta_{0}} =4\gamma_{0}^{2}h^{8}\sigma^{-2}\leq 4C_{ts}^{2}\gamma_{0}^{2}h^{6}$. Substituting this into estimate $(\ref{53})$ and utilizing the norm $\|\cdot\|_{\bar{0},B}$ defined in equation $(\ref{38a})$, to obtain
      \begin{equation*}
      \|e_{h}^{n+1}\|_{\bar{0},B}^{2}\leq \beta_{2}^{-1}e^{\frac{16T}{\beta_{0}}}\left\{4C_{ts}^{2}\gamma_{0}^{2}h^{6}\|u^{1}\|_{[H^{5}]^{3}}^{2} +\frac{\sigma^{4}}{576\rho}\left\{26\||(F(u))_{2t}|\|_{\bar{0},\infty}^{2}+128\rho^{2}\||u_{4t}|\|_{\bar{0},\infty}^{2}+\right.\right.
     \end{equation*}
     \begin{equation*}
      \left.\left.52\left[(\lambda_{1}+\lambda_{2})^{2}\||\nabla\cdot\overline{\nabla}u_{2t}|\|_{\bar{0},\infty}^{2}+\lambda_{2}^{2} \||\nabla(\nabla\cdot u_{2t})|\|_{\bar{0},\infty}^{2} \right]\right\}\right\}\exp\left(\frac{C_{1}C_{3}T}{2\rho}e^{\frac{16T}{\beta_{0}}}\right).
      \end{equation*}

      The square root in both sides of this inequality gives
      \begin{equation*}
      \|e_{h}^{n+1}\|_{\bar{0},B}\leq\beta_{2}^{-\frac{1}{2}}e^{\frac{8T}{\beta_{0}}}\left\{4C_{ts}^{2}\gamma_{0}^{2}h^{6}\|u^{1}\|_{[H^{5}]^{3}}^{2} +\frac{\sigma^{4}}{576\rho}\left\{26\||(F(u))_{2t}|\|_{\bar{0},\infty}^{2}+128\rho^{2}\||u_{4t}|\|_{\bar{0},\infty}^{2}+\right.\right.
     \end{equation*}
     \begin{equation}\label{54}
      \left.\left.52\left[(\lambda_{1}+\lambda_{2})^{2}\||\nabla\cdot\overline{\nabla}u_{2t}|\|_{\bar{0},\infty}^{2}+\lambda_{2}^{2}\||\nabla(\nabla\cdot u_{2t})|\|_{\bar{0},\infty}^{2}\right]\right\}\right\}^{\frac{1}{2}}\exp\left(\frac{C_{1}C_{3}T}{4\rho}e^{\frac{16T}{\beta_{0}}}\right).
      \end{equation}

      It is easy to see that
      \begin{equation*}
      4C_{ts}^{2}\gamma_{0}^{2}h^{6} \|u^{1}\|_{[H^{5}]^{3}}^{2}+\frac{\sigma^{4}}{576\rho}\left\{26\||(F(u))_{2t}|\|_{\bar{0},\infty}^{2}+128\rho^{2}
      \||u_{4t}|\|_{\bar{0},\infty}^{2}+52\left[(\lambda_{1}+\lambda_{2})^{2}\||\nabla\cdot\overline{\nabla}u_{2t}|\|_{\bar{0},\infty}^{2}+\right.\right.
     \end{equation*}
     \begin{equation*}
      \left.\left.\lambda_{2}^{2} \||\nabla(\nabla\cdot u_{2t})|\|_{\bar{0},\infty}^{2} \right]\right\}\leq \left\{4C_{ts}^{2}\gamma_{0}^{2}\|u^{1}\|_{[H^{5}]^{3}}^{2} +\frac{1}{576\rho}\left\{26\||(F(u))_{2t}|\|_{\bar{0},\infty}^{2}+128\rho^{2}\||u_{4t}|\|_{\bar{0},\infty}^{2}+\right.\right.
     \end{equation*}
     \begin{equation*}
      \left.\left.52\left[(\lambda_{1}+\lambda_{2})^{2}\||\nabla\cdot\overline{\nabla}u_{2t}|\|_{\bar{0},\infty}^{2}+\lambda_{2}^{2}\||\nabla(\nabla\cdot u_{2t})|\|_{\bar{0},\infty}^{2} \right]\right\}\right\}(\sigma^{2}+h^{3})^{2}.
     \end{equation*}

     Taking the square root of this estimate and substituting the obtained inequality into estimate $(\ref{54})$ to end the proof of estimate $(\ref{36})$.
      Since $\|u_{h}^{n+1}\|_{\bar{0},B}-\|u^{n+1}\|_{\bar{0},B}\leq\|e_{h}^{n+1}\|_{\bar{0},B}$, estimate $(\ref{54})$ implies
      \begin{equation*}
      \|u_{h}^{n+1}\|_{\bar{0},B} \leq \|u^{n+1}\|_{\bar{0},B}+\beta_{2}^{-\frac{1}{2}}\exp\left(\frac{8T}{\beta_{0}}+\frac{C_{1}C_{3}T}{4\rho}e^{\frac{16T}{\beta_{0}}}\right)
      \left\{4C_{ts}^{2}\gamma_{0}^{2}\|u^{1}\|_{[H^{5}]^{3}}^{2} +\frac{\sigma^{4}}{576\rho}\left\{26\||(F(u))_{2t}|\|_{\bar{0},\infty}^{2}+\right.\right.
     \end{equation*}
     \begin{equation*}
      \left.\left.128\rho^{2}\||u_{4t}|\|_{\bar{0},\infty}^{2}+52\left[(\lambda_{1}+\lambda_{2})^{2}\||\nabla\cdot\overline{\nabla}u_{2t}|\|_{\bar{0},\infty}^{2}+
      \lambda_{2}^{2} \||\nabla(\nabla\cdot u_{2t})|\|_{\bar{0},\infty}^{2} \right]\right\}\right\}^{\frac{1}{2}}(\sigma^{2}+h^{3}),
      \end{equation*}
      for $n=0,1,...,N-1$. This completes the proof of Theorem $\ref{t1}$.
      \end{proof}

      \section{Numerical experiments}\label{sec4}

      \text{\,\,\,\,\,\,\,\,\,\,}This section considers a high-order explicit computational approach for simulating a three-dimensional system of nonlinear elastodynamic sine-Gordon model $(\ref{1})$ with initial and boundary conditions $(\ref{2})$ and $(\ref{3})$, respectively. Two numerical examples are carried out to confirm the theory and to demonstrate the utility and efficiency of the proposed technique $(\ref{s1})$-$(\ref{s5})$. To check the stability and convergence order of the proposed technique $(\ref{s1})$-$(\ref{s5})$, we take $h=\frac{1}{2^{m}},$ for $m=2,3,4,5,6$, where $h=\max\{h_{T},\text{\,\,}T\in\Pi_{h}\}$. Furthermore, we use a uniform time step $\sigma=2^{-m}$, for $m=5,6,7,8,9$. We compute the errors: $u_{h}^{n}-u^{n}$ and $\psi_{h}^{n}-\psi^{n}$ together with the displacement $u_{h}$ and symmetric stress tensor $\psi_{h}$, at time $t_{n}$ using the norms $\||\cdot|\|_{\bar{0},\infty}$ and $\||\cdot|\|_{*,\infty}$, defined as
      \begin{equation*}
       \||v|\|_{\bar{0},\infty}=\underset{0\leq n\leq N}{\max}\|v^{n}\|_{\bar{0}},\text{\,\,\,}\forall v\in\mathcal{U}\text{\,\,\,and\,\,\,}\||\tau|\|_{*,\infty}=\underset{0\leq n\leq N}{\max}\|\tau^{n}\|_{*},\text{\,\,\,}\forall \tau\in\mathcal{M}.
         \end{equation*}

        It's worth mentioning that determining the exact solution of the initial-boundary value problem $(\ref{1})$-$(\ref{3})$ is too difficult and sometimes impossible. Hence, the spatial errors are calculated assuming that the exact solution is the approximate one obtained with the time step $\sigma=2^{-7}$, whereas the temporal errors are computed assuming that the analytical solution equals to the numerical one with the time step $\sigma=2^{-10}$. Additionally, the space convergence order, $CO(h)$, of the constructed computational scheme is calculated utilizing the formula
         \begin{equation*}
          CO(h)=\frac{\log\left(\frac{\||u_{2h}-u|\|_{\bar{0},\infty}}{\||u_{h}-u|\|_{\bar{0},\infty}}\right)}{\log(2)},\text{\,\,}
          \frac{\log\left(\frac{\||\tau_{2h}-\tau|\|_{*,\infty}}{\||\tau_{h}-\tau|\|_{*,\infty}}\right)}{\log(2)},
         \end{equation*}
          where, $z_{2h}$ and $z_{h}$ are the spatial errors associated with the grid sizes $2h$ and $h$, respectively, while the convergence order in time, $CO(\sigma)$, is estimated using the formula
         \begin{equation*}
          CO(\sigma)=\frac{\log\left(\frac{\||u_{2\sigma}-u|\|_{\bar{0},\infty}}{\||u_{\sigma}-u|\|_{\bar{0},\infty}}\right)}{\log(2)},\text{\,\,}
          \frac{\log\left(\frac{\||\tau_{2\sigma}-\tau|\|_{*,\infty}}{\||\tau_{\sigma}-\tau|\|_{*,\infty}}\right)}{\log(2)},
         \end{equation*}
         where $z_{\sigma}$ and $z_{2\sigma}$ denote the errors in time corresponding to time steps $\sigma$ and $2\sigma$, respectively. It's worth mentioning that the numerical computations are carried out using MATLAB R$2007b$.\\

          $\bullet$ \textbf{Example 1}. We consider the three-dimensional system of nonlinear elastodynamic sine-Gordon equations $(\ref{1})$ defined on the domain $\overline{\Omega}=[0,\text{\,}1]^{3}$. The final time $T=1$. The physical parameters are given by: $\rho=1$, $\beta=0.25$, $E=2.5$, $\lambda_{1}=\frac{\beta E}{(1+\beta)(1-2\beta)}=1$ and $\lambda_{2}=\frac{E}{2(1+\beta)}=1$. The nonlinear source term $F(u(x,t))=[\sin u_{1}(x,t),\sin u_{2}(x,t),\sin u_{3}(x,t)]^{T}$. The initial conditions are given by equations $(\ref{2})$ and $(\ref{4})$ while the boundary conditions are defined as: $u(t)|_{\Gamma}=\vec{0}$, for every $t\in[0,\text{\,}1]$. The disaster starts at the focus $B(\bar{x},\text{\,}r_{0})$, where $\bar{x}=(0.5,0.5,0.5)^{t}$ and $r_{0}=2^{-3}$. We set $C_{p}=\frac{1}{3}$ and $C_{ts}=0.5$ be the positive constants defined in estimates $(\ref{31})$ and $(\ref{32})$, respectively.\\
          
         \textbf{Table 1.} $\label{T1}$ Convergence order $CO(h)$ of the constructed high-order explicit computational approach $(\ref{s1})$-$(\ref{s5})$ utilizing the time step $\sigma=2^{-7}$ and various space steps $h$, satisfying condition $(\ref{32})$.
          \begin{equation*}
          \begin{array}{c c}
          \text{\,developed computational technique,\,\,where\,\,}\sigma=2^{-7}& \\
           \begin{tabular}{ccccccc}
            \hline
            $h$ &  $\||u_{h}-u|\|_{\bar{0},\infty}$ & $CO(h)$ & CPU(s) & $\||\psi_{h}-\psi|\|_{*,\infty}$ & $CO(h)$ & CPU(s)\\
             \hline
            $2^{-2}$ &  $6.1879\times10^{-3}$ & .... & 1.5573    & $3.1009\times10^{-3}$ & .... & 5.0019\\

            $2^{-3}$ &  $8.3599\times10^{-4}$ & 2.8879 &  4.2476 & $3.8571\times10^{-4}$ & 3.0071 & 10.5810\\

            $2^{-4}$ &  $1.0353\times10^{-4}$ & 3.0135 &  10.1339 & $4.5232\times10^{-5}$ & 3.0921 & 26.7900\\

            $2^{-5}$ &  $1.2963\times10^{-5}$ & 2.9976 & 20.5032 & $5.7393\times10^{-6}$ & 2.9784 & 73.8871 \\

            $2^{-6}$ &  $1.5117\times10^{-6}$ & 3.1002 & 65.9342 & $6.5915\times10^{-6}$ & 3.1222 & 220.6784 \\
            \hline
          \end{tabular} &
          \end{array}
          \end{equation*}
          
           \textbf{Table 2.} $\label{T2}$ Convergence order $CO(\sigma)$ of the developed approach $(\ref{s1})$-$(\ref{s5})$ with space step $h=2^{-3}$ and varying time steps $\sigma$, satisfying requirement $(\ref{32})$.
           \begin{equation*}
          \begin{array}{c c}
          \text{\,developed computational approach\,\,where\,\,}h=2^{-3}& \\
           \begin{tabular}{ccccccc}
            \hline
            $\sigma$ &  $\||u_{h}-u|\|_{\bar{0},\infty}$ & $CO(\sigma)$ & CPU(s) & $\||\psi_{h}-\psi|\|_{*,\infty}$ & $CO(\sigma)$ & CPU(s)\\
             \hline
            $2^{-5}$ &  $3.9776\times10^{-2}$ & ....   & 2.1103  & $7.2589\times10^{-3}$ & .... & 2.7398\\

            $2^{-6}$ &  $9.8828\times10^{-3}$ & 2.0089 & 4.2179  & $1.9454\times10^{-3}$ & 1.8997 & 5.4793\\

            $2^{-7}$ &  $2.3024\times10^{-3}$ & 2.1018 & 8.9389  & $4.8885\times10^{-4}$ & 1.9926 & 14.5687\\

            $2^{-8}$ &  $5.7648\times10^{-4}$ & 1.9978 & 21.0940 & $1.2177\times10^{-4}$ & 2.0052 & 38.7366\\

            $2^{-9}$ &  $1.3267\times10^{-4}$ & 2.1194 & 60.5399 & $2.7941\times10^{-5}$ & 2.1237 & 116.1673\\
            \hline
          \end{tabular} &
          \end{array}
          \end{equation*}

            \textbf{Table 3.} $\label{T3}$ Displacement $u_{h}$ and symmetric stress tensor $\psi_{h}=[\psi_{h,ij}]$, for $1\leq i,j\leq3$, provided by the new algorithm using a space step $h=2^{-3}$ and time step $\sigma=2^{-7}$.
          \begin{equation*}
          \begin{array}{c }
          \text{\,Developed approach,\,\,where\,\,}\sigma=2^{-7} \\
           \begin{tabular}{cccccccc}
            \hline
            $h$ &  $\||u_{h}|\|_{\bar{0},\infty}$ & $\||\psi_{h,11}|\|_{0,\infty}$ & $\||\psi_{h,22}|\|_{0,\infty}$ & $\||\psi_{h,33}|\|_{0,\infty,}$ & $\||\psi_{h,12}|\|_{0,\infty}$ & $\||\psi_{h,13}|\|_{0,\infty}$ & $\||\psi_{h,23}|\|_{0,\infty}$\\
             \hline
            $2^{-3}$ & $2190$ & $4811$ & $5757$ & $16313$ & $495$ & $5184$ & $5662$\\
            \hline
          \end{tabular}
          \end{array}
          \end{equation*}
          
           Additionally, the stress tensor is defined as
          \begin{equation*}
           \psi_{h}=\begin{bmatrix}
              4811 & 495 & 5184 \\
              495  & 5757 & 5662 \\
              5184 & 5662 & 16313 \\
            \end{bmatrix}
          \end{equation*}

          $\bullet$ \textbf{Example 2}. Consider the three-dimensional system of nonlinear tectonic deformation problem $(\ref{1})$ defined on the region $\overline{\Omega}\times[0,\text{\,}T]=[-1,\text{\,}1]^{3}\times[0,\text{\,}2]$. The physical parameters are given as: $\rho=1$, $\beta=0.1$, $E=2.5$, $\lambda_{1}=\frac{\beta E}{(1+\alpha)(1-2\beta)}=0.2841$ and $\lambda_{2}=\frac{E}{2(1+\beta)}=1.1364$. The nonlinear source term $F(u(x,t))=[\sin u_{1}(x,t),\sin u_{2}(x,t),\sin u_{3}(x,t)]^{T}$. The initial conditions are given by equations $(\ref{2})$ and $(\ref{4})$ while the boundary conditions are defined as: $u(t)|_{\Gamma}=\vec{0}$, for every $t\in[0,\text{\,}1]$. The disaster is assumed to start at the focus $B(\bar{x},\text{\,}r_{0})$, where $\bar{x}=(0,0,0)^{T}$ and $r_{0}=2^{-3}$. We set $C_{p}=\frac{1}{3}$ and $C_{ts}=0.5$ be the positive constants defined in estimates $(\ref{31})$ and $(\ref{32})$, respectively.\\
          
         \textbf{Table 4.} $\label{T4}$ Convergence order $CO(h)$ of the constructed high-order explicit computational approach $(\ref{s1})$-$(\ref{s5})$ using the time step $\sigma=2^{-7}$ and various space steps $h$, satisfying condition $(\ref{32})$.
          \begin{equation*}
          \begin{array}{c c}
          \text{\,developed computational technique,\,\,where\,\,}\sigma=2^{-7}& \\
           \begin{tabular}{ccccccc}
            \hline
            $h$ &  $\||u_{h}-u|\|_{\bar{0},\infty}$ & $CO(h)$ & CPU(s) & $\||\psi_{h}-\psi|\|_{*,\infty}$ & $CO(h)$ & CPU(s)\\
             \hline
            $2^{-2}$ &  $4.2568\times10^{-3}$ & ....  & 1.8726    & $2.8973\times10^{-3}$ & .... & 4.2341\\

            $2^{-3}$ &  $5.3859\times10^{-4}$ & 2.9825 & 4.1489 & $3.5834\times10^{-4}$ & 3.0153 & 9.5534\\

            $2^{-4}$ &  $6.7319\times10^{-5}$ & 3.0001 & 10.2471 & $4.5157\times10^{-5}$ & 2.9883 & 23.4889\\

            $2^{-5}$ &  $7.9406\times10^{-6}$ & 3.0837 & 28.5267 & $5.6154\times10^{-6}$ &  3.0075 & 64.8248 \\

            $2^{-6}$ &  $9.6852\times10^{-7}$ & 3.0354 & 82.5335 & $6.5451\times10^{-7}$ &  3.1009 & 193.2621 \\
            \hline
          \end{tabular} &
          \end{array}
          \end{equation*}

          \textbf{Table 5.} $\label{T5}$ Convergence order $CO(\sigma)$ of the developed approach $(\ref{s1})$-$(\ref{s5})$ with space step $h=2^{-3}$ and varying time steps $\sigma$, satisfying requirement $(\ref{32})$.
           \begin{equation*}
          \begin{array}{c c}
          \text{\,developed computational approach\,\,where\,\,}h=2^{-3}& \\
           \begin{tabular}{ccccccc}
            \hline
            $\sigma$ &  $\||u_{h}-u|\|_{\bar{0},\infty}$ & $CO(\sigma)$ & CPU(s) & $\||\psi_{h}-\psi|\|_{*,\infty}$ & $CO(\sigma)$ & CPU(s)\\
             \hline
            $2^{-5}$ &  $2.6283\times10^{-3}$ & ....  & 2.1005    & $3.9057\times10^{-4}$ & .... & 4.9216\\

            $2^{-6}$ &  $6.5448\times10^{-4}$ & 2.0057 & 4.6959  & $9.7643\times10^{-5}$ & 2.0000 & 12.0884\\

            $2^{-7}$ &  $1.5559\times10^{-4}$ & 2.0726 & 11.6782  & $2.4545\times10^{-5}$ & 1.9921 & 33.3617\\

            $2^{-8}$ &  $3.6273\times10^{-5}$ & 2.1008 & 31.0616 & $6.0033\times10^{-6}$ & 2.0316 & 96.4586\\

            $2^{-9}$ &  $8.1564\times10^{-6}$ & 2.1529 & 92.7655 & $1.3212\times10^{-6}$ & 2.1839 & 289.3661\\
            \hline
          \end{tabular} &
          \end{array}
          \end{equation*}

           \textbf{Table 6.} $\label{T6}$ Displacement $u_{h}$ and symmetric stress tensor $\psi_{h}=[\psi_{h,ij}]$, for $1\leq i,j\leq3$, provided by the proposed computational technique using a space step $h=2^{-3}$ and time step $\sigma=2^{-7}$.
          \begin{equation*}
          \begin{array}{c }
          \text{\,Developed approach,\,\,where\,\,}\sigma=2^{-7} \\
           \begin{tabular}{cccccccc}
            \hline
            $h$ &  $\||u_{h}|\|_{\bar{0},\infty}$ & $\||\psi_{h,11}|\|_{0,\infty}$ & $\||\psi_{h,22}|\|_{0,\infty}$ & $\||\psi_{h,33}|\|_{0,\infty,}$ & $\||\psi_{h,12}|\|_{0,\infty}$ & $\||\psi_{h,13}|\|_{0,\infty}$ & $\||\psi_{h,23}|\|_{0,\infty}$\\
             \hline
            $2^{-3}$ & $6.283\times10^{7}$ & $2.412\times10^{7}$ & $3.191\times10^{7}$ & $2.6867\times10^{8}$ & $1.66\times10^{6}$ & $ 2.903\times10^{7}$ & $3.011\times10^{7}$\\
            \hline
          \end{tabular}
          \end{array}
          \end{equation*}
           In addition, the symmetric stress tensor is given by
          \begin{equation*}
           \psi_{h}=10^{7}\times\begin{bmatrix}
              2.412 & 0.166 & 2.903 \\
              0.166 & 3.191 & 3.011 \\
              2.903 & 3.011 & 26.867 \\
            \end{bmatrix}
          \end{equation*}

          It follows from \textbf{Tables 1-2 $\&$ 4-5} that the errors associated with the displacement and stress tensor are temporal second-order and spatial third-order. This suggests that the developed algorithm $(\ref{s1})$-$(\ref{s5})$ is temporal second-order accurate and third-order convergent in space. Additionally, \textbf{Tables 3 $\&$ 6} give the displacement ($u_{h}$) and symmetric stress tensor ($\psi_{h}$). It follows from the tables that the approximate solutions do not increase with time and converge to the analytical one.\\

         Furthermore, both displacement and symmetric stress tensor provided by the proposed computational approach $(\ref{s1})$-$(\ref{s5})$ are displayed in Figures $\ref{fig2}$-$\ref{fig3}$. A time step $\sigma=2^{-7}$, and space step $h=2^{-3},$ satisfying the time step requirement $(\ref{32})$ are used. The figures show that the displacement and stress tensor propagate with almost a perfectly value at different positions. Thus, the computed solutions could not increase with time. Additionally, both tables and figures indicate that the numerical solutions do not increase with time and converge to the exact solutions. Moreover, they show that the proposed technique $(\ref{s1})$-$(\ref{s5})$ is not unconditionally unstable, but stability depends on the parameters $\sigma$ and $h$. Finally, the information provided by the tables and figures indicate that the new algorithm $(\ref{s1})$-$(\ref{s5})$ should be considered as a powerful tool to predict runout and delineate the hazardous natural disaster areas.

       \section{General conclusions and future works}\label{sec5}
        This paper has developed a high-order explicit computational technique for solving a three-dimensional system of nonlinear elastodynamic sine-Gordon model $(\ref{1})$, subjects to initial-boundary conditions $(\ref{2})$-$(\ref{3})$. Under an appropriate time step restriction $(\ref{32})$, both stability and error estimates of the developed approach have been deeply analyzed using a constructed strong norm. The theory indicated that the proposed computational technique is stable, temporal second-order accurate and spatial third-order convergent. Utilizing the $L^{\infty}(0,T;\text{\,}L^{2})$-norm, some numerical examples have been carried out to confirm the theoretical results. Specifically, the graphs (Figures $\ref{fig2}$-$\ref{fig3}$) indicate that the constructed numerical approach $(\ref{s1})$-$(\ref{s5})$ is stable whereas \textbf{Tables 1-2 $\&$ 4-5} suggest that the new method is second-order accurate in time and third-order convergent in space. Furthermore, both figures and tables showed that the displacement and stress tensor do not increase with time. This observation suggests that the proposed approach overcomes the limitations raised by a wide set of numerical schemes deeply discussed in the literature \cite{8dg,dg,jtfyl,14dg}. Finally, the tables and figures indicate that the new algorithm should be considered as a powerful tool for providing useful data and related information on some natural disasters which would help communities to be informed about the hazard zones. Our future works will develop a high-order unconditionally stable explicit/implicit approach with finite element method for solving a three-dimensional system of nonlinear tectonic deformation.

      \subsection*{Ethical Approval}
     Not applicable.
     \subsection*{Availability of supporting data}
     Not applicable.
     \subsection*{Declaration of Interest Statement}
     The author declares that he has no conflict of interests.
     \subsection*{Acknowledgment}
     This work was supported and funded by the Deanship of Scientific Research at Imam Mohammad Ibn Saud Islamic University (IMSIU) (grant number IMSIU-DDRSP2501).
     \subsection*{Authors' contributions}
     The whole work has been carried out by the author.

     \begin{figure}
         \begin{center}
         Stability analysis of the developed computational approach for three-dimensional system of elastodynamic sine-Gordon model.
         \begin{tabular}{c c}
         \psfig{file=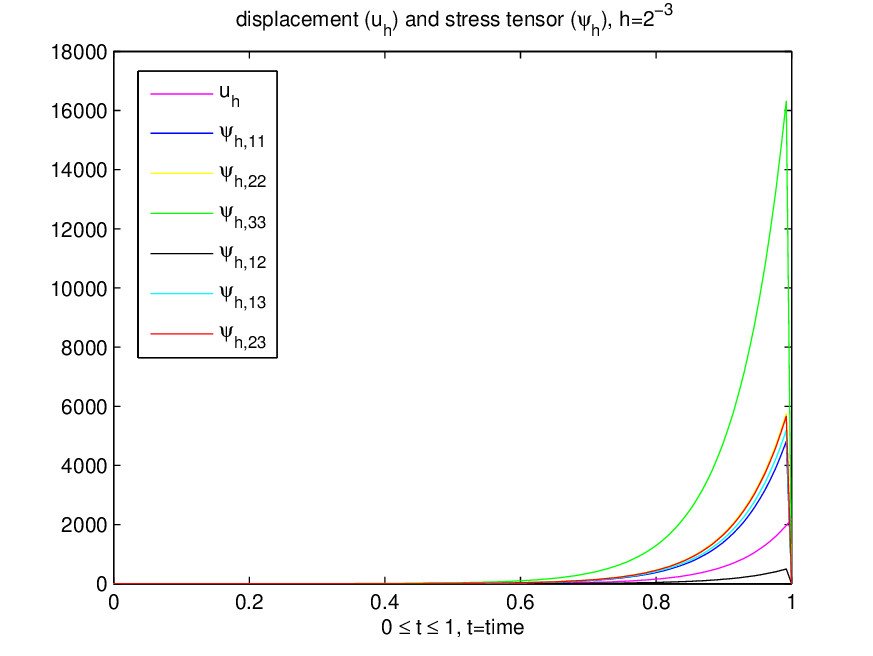,width=7cm} & \psfig{file=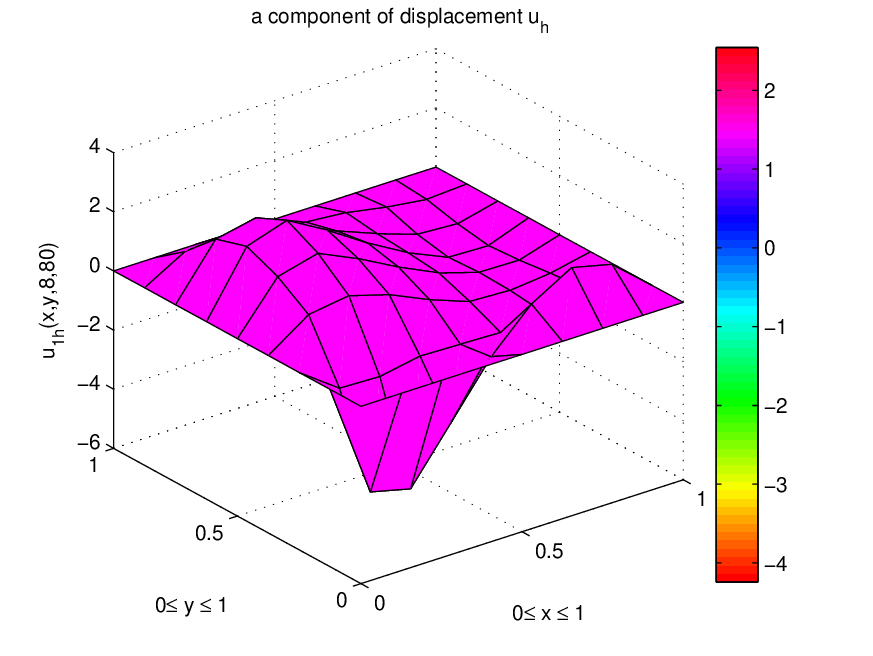,width=7cm}\\
         \psfig{file=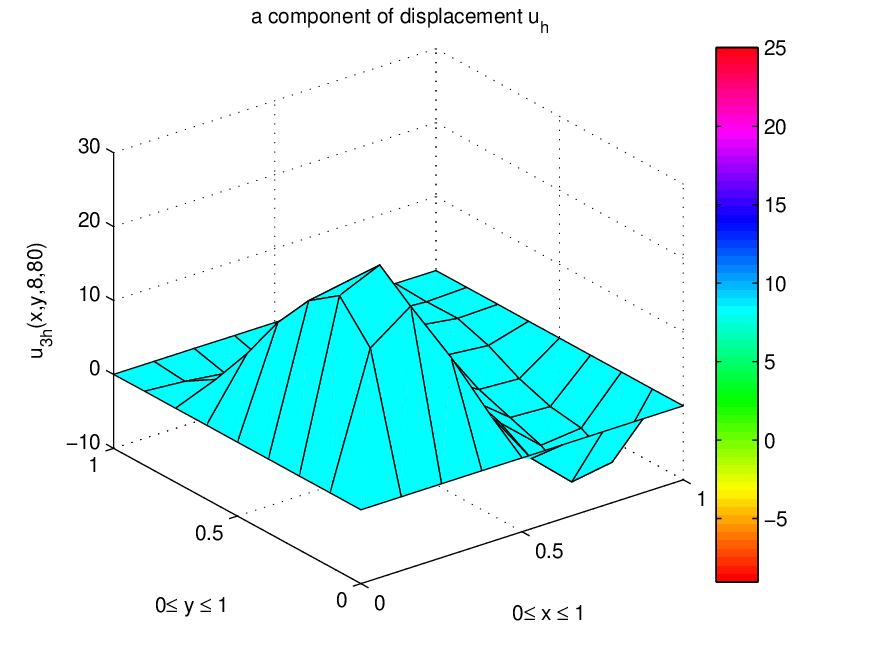,width=7cm} & \psfig{file=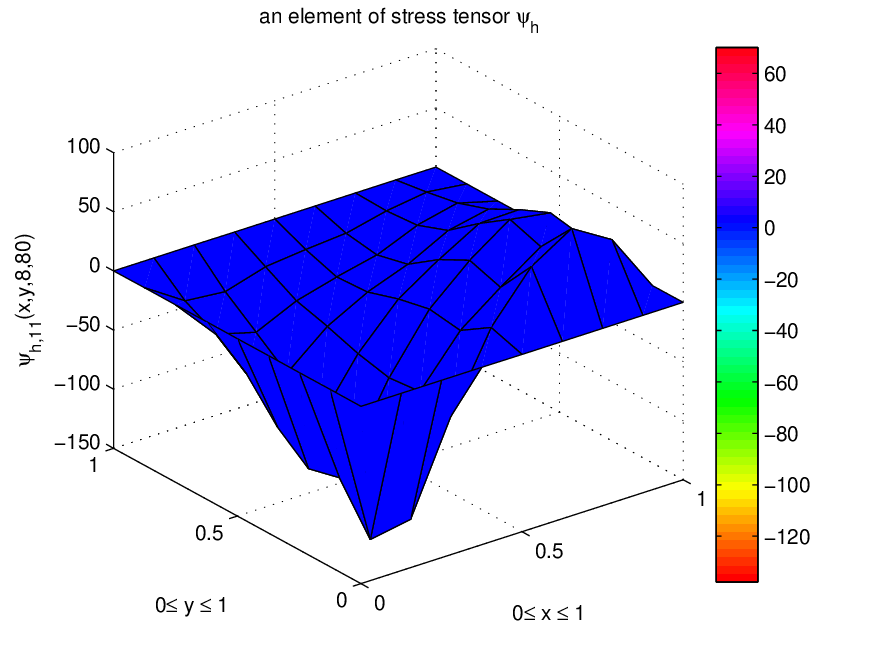,width=7cm}\\
         \psfig{file=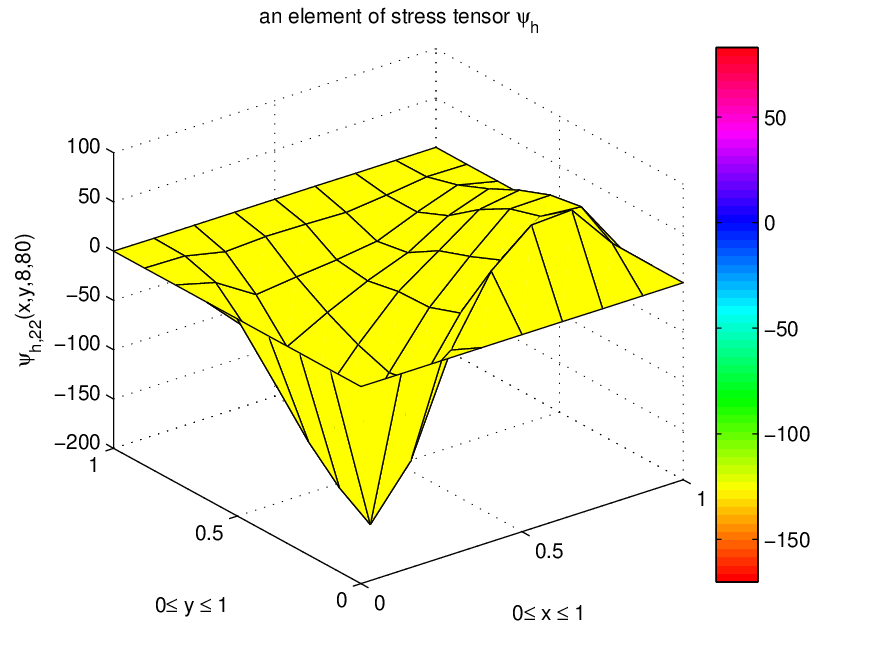,width=7cm} & \psfig{file=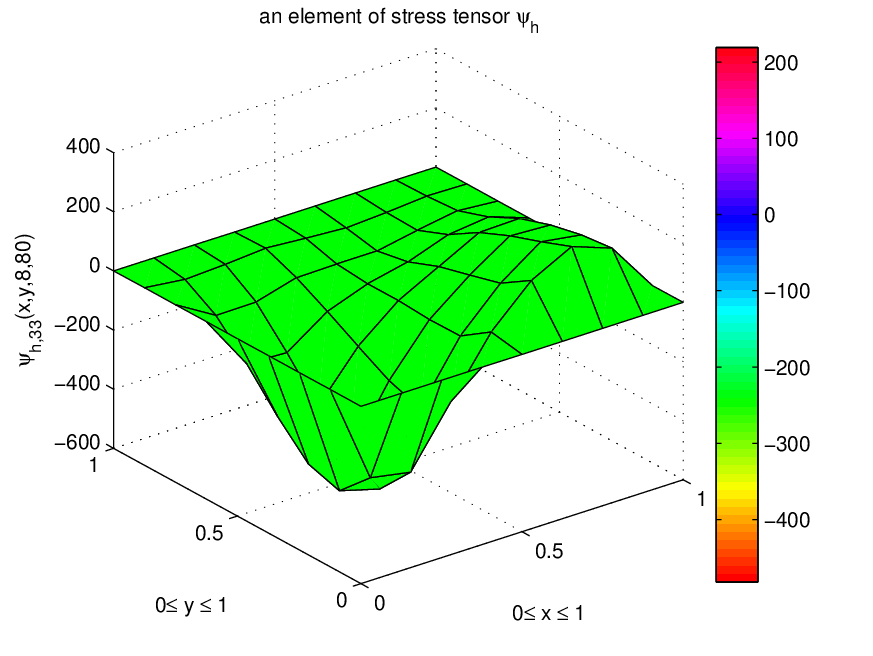,width=7cm}\\
         \psfig{file=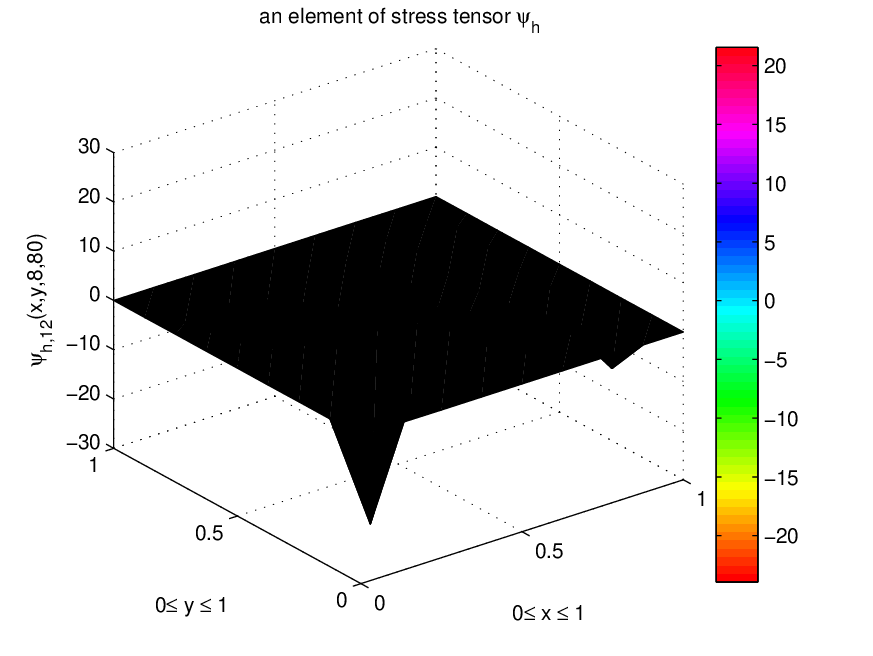,width=7cm} & \psfig{file=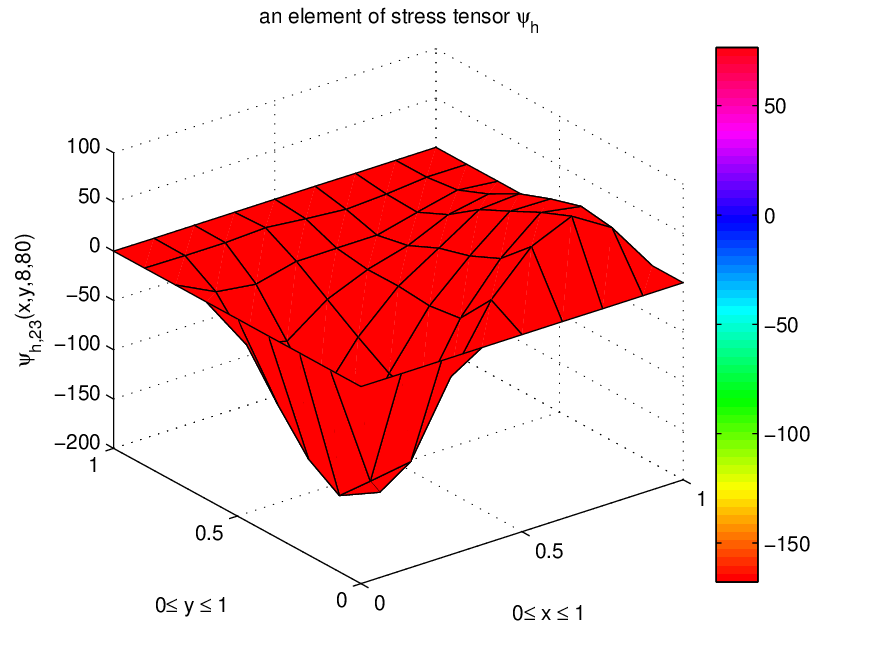,width=7cm}
         \end{tabular}
        \end{center}
        \caption{Graphs of displacement $(u_{h})$ and stress tensor $(\psi_{h})$ corresponding to Example 1.}
        \label{fig2}
        \end{figure}

       \begin{figure}
       \begin{center}
       Stability analysis of the developed computational approach for three-dimensional system of elastodynamic sine-Gordon model.
       \begin{tabular}{c c}
         \psfig{file=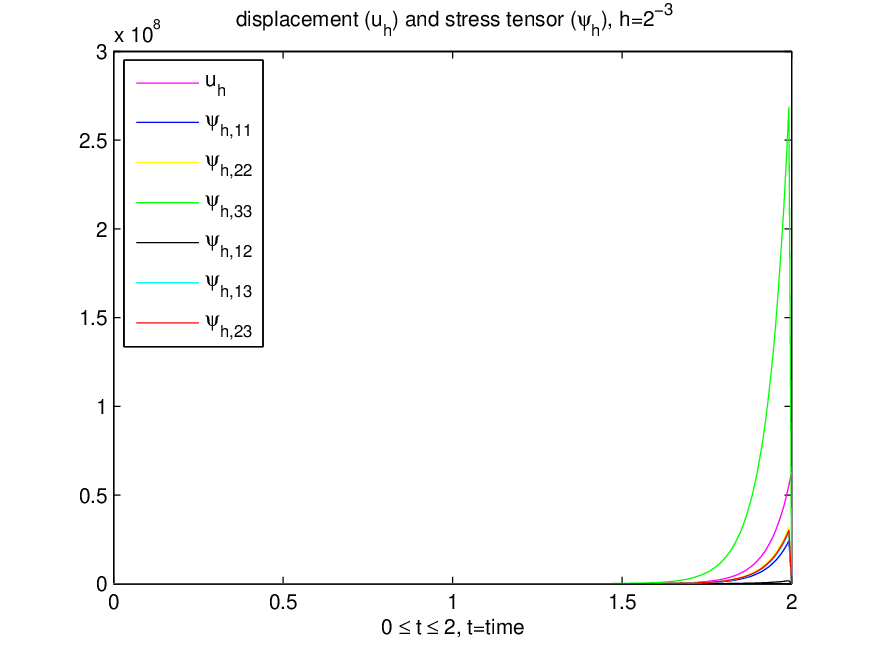,width=7cm} & \psfig{file=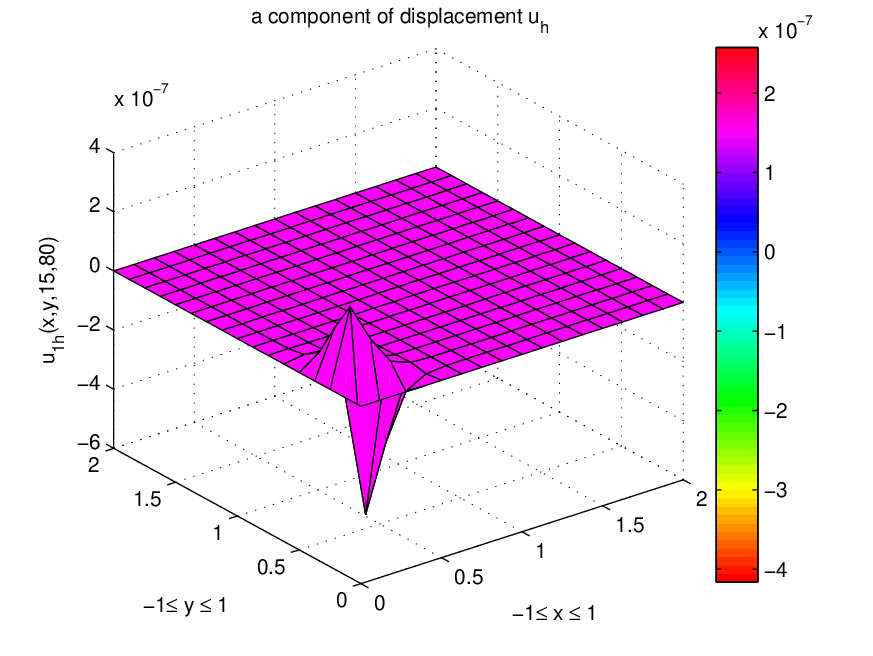,width=7cm}\\
         \psfig{file=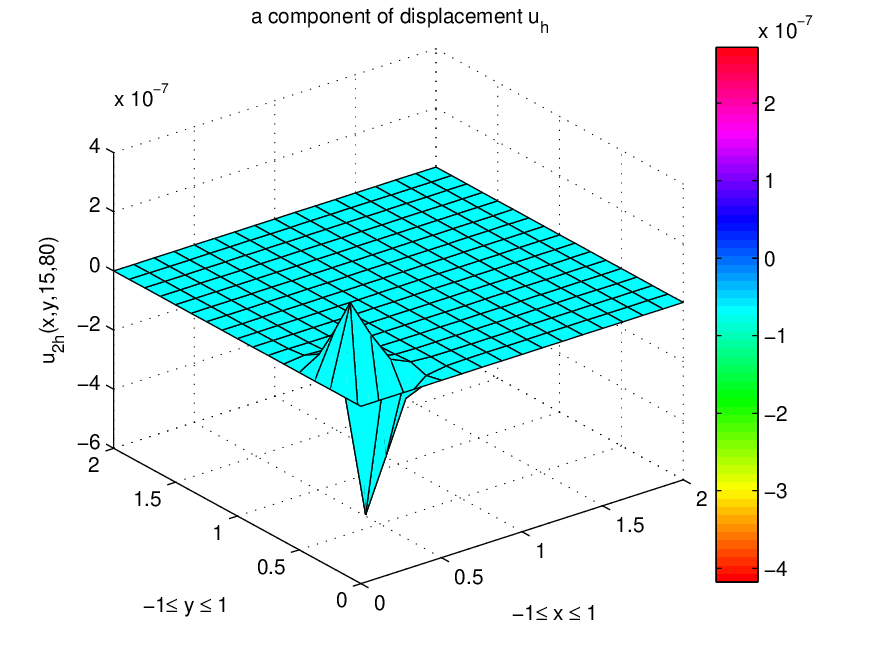,width=7cm} & \psfig{file=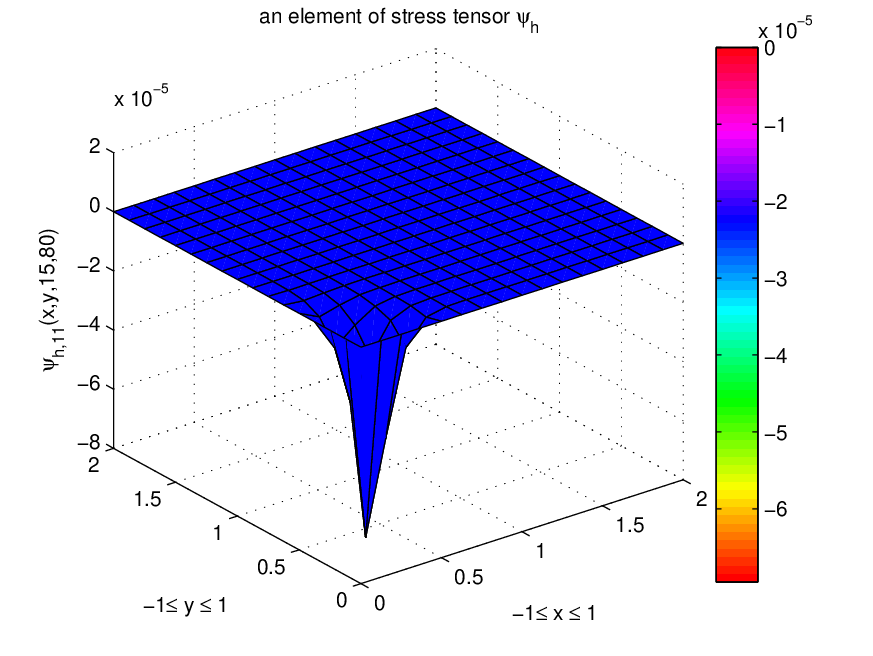,width=7cm}\\
         \psfig{file=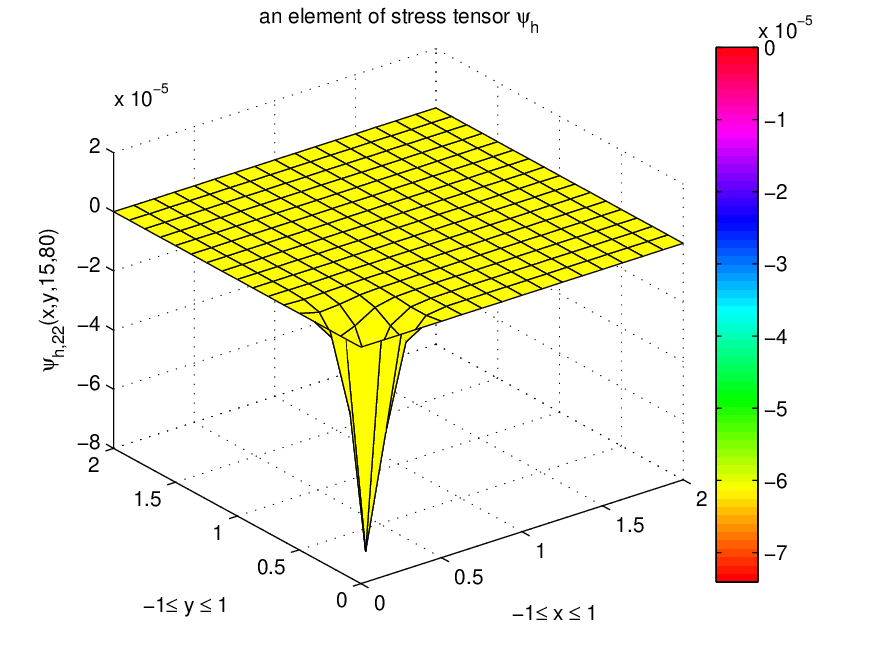,width=7cm} & \psfig{file=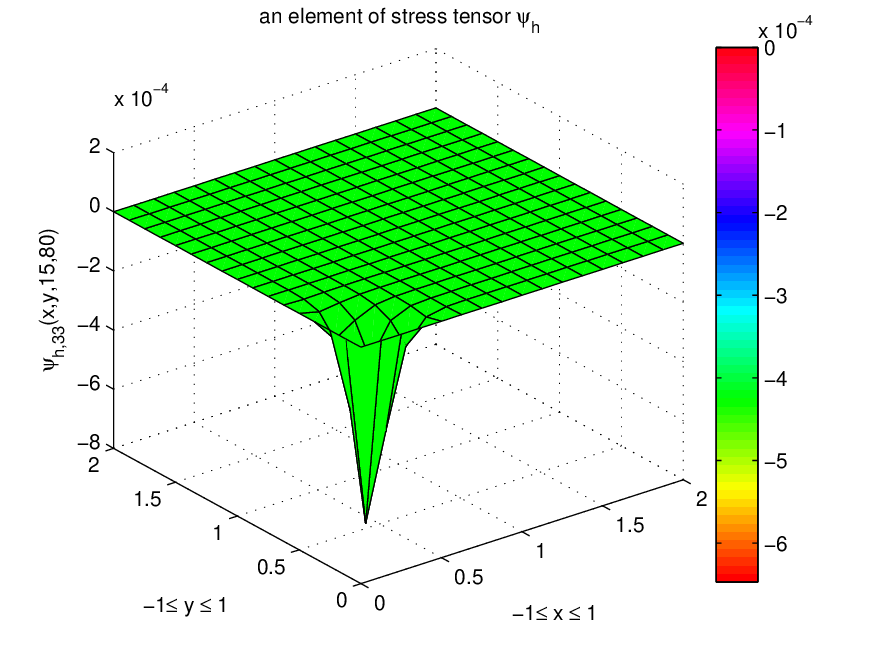,width=7cm}\\
         \psfig{file=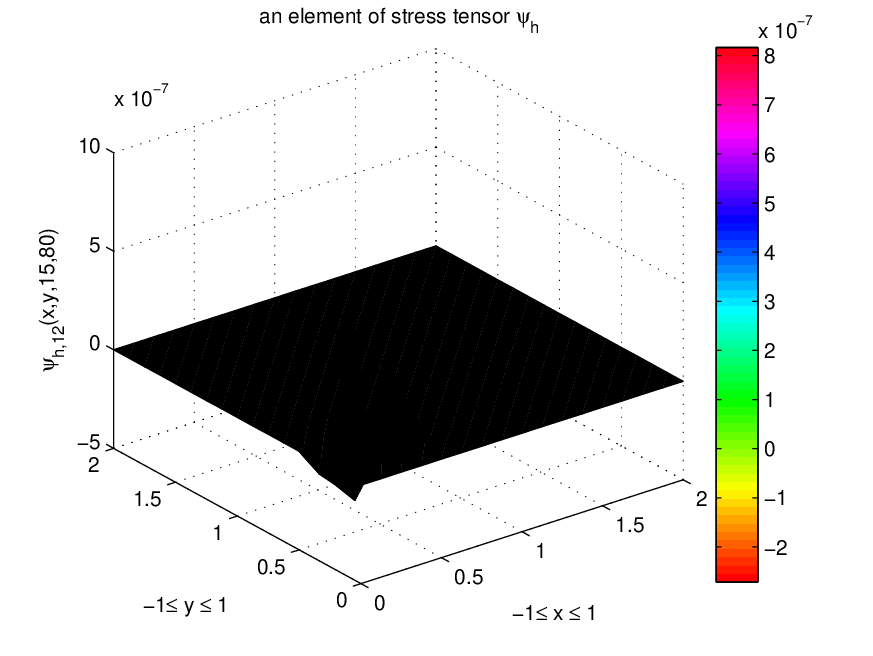,width=7cm} & \psfig{file=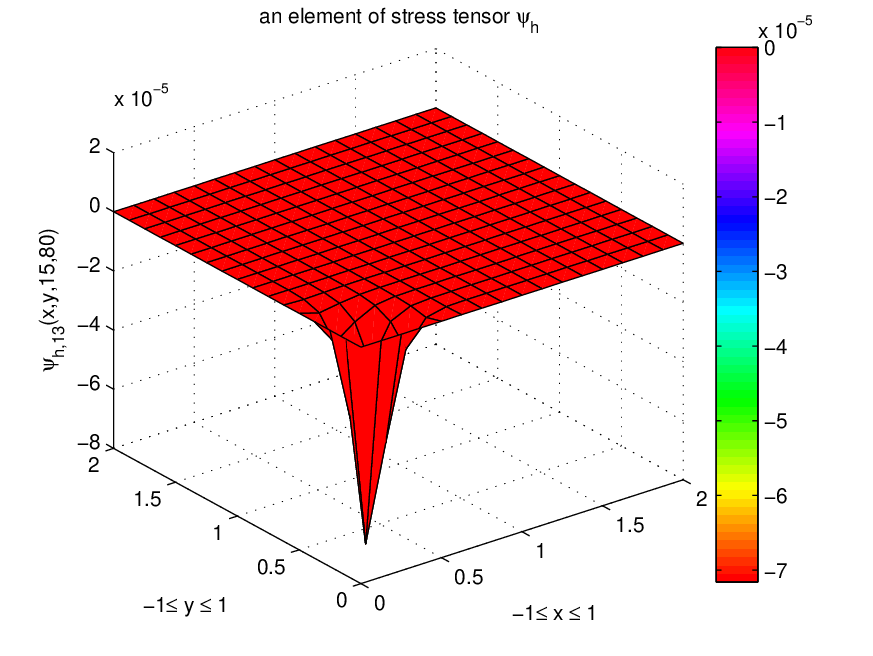,width=7cm}
         \end{tabular}
        \end{center}
         \caption{Graphs of displacement $(u_{h})$ and stress tensor $(\psi_{h})$ corresponding to Example 2.}
          \label{fig3}
          \end{figure}

     \end{document}